\numberwithin{equation}{section}
\newtheorem{Theorem}{Theorem}[section]
\newtheorem{Lemma}[Theorem]{Lemma}
\newtheorem{Proposition}[Theorem]{Proposition}
\newtheorem{Corollary}[Theorem]{Corollary}
\newtheorem{Conjecture}[Theorem]{Conjecture}
\theoremstyle{definition}
\newtheorem{Definition}[Theorem]{Definition}
\newtheorem{Construction}[Theorem]{Construction}
\theoremstyle{remark}
\newtheorem{Remark}[Theorem]{Remark}
\newcommand{\defi}[1]{\emph{#1}} 
\def\act#1#2%
\newcommand{\F}{{\mathbb F}}
\newcommand{\A}{{\mathbb A}}
\newcommand{\Cbar}{{\overline{C}}}
\newcommand{\Dbar}{{\overline{D}}}
\newcommand{\Fbar}{{\overline{\F}}}
\newcommand{\xbar}{{\overline{x}}}
\newcommand{\ybar}{{\overline{y}}}
\newcommand{\calO}{{\mathcal O}}
\newcommand{\To}{\longrightarrow}
\DeclareMathOperator{\Map}{Map}
\DeclareMathOperator{\Hom}{Hom}
\DeclareMathOperator{\Gal}{Gal}
\DeclareMathOperator{\Br}{Br}
\DeclareMathOperator{\HH}{H}
\DeclareMathOperator{\Spec}{Spec}
\DeclareMathOperator{\Mor}{Mor}
\newcommand{\sep}{\operatorname{s}}
\newcommand{\unr}{\operatorname{unr}}
\newcommand{\etale}{\operatorname{\textup{\'et}}}
\newcommand{\wb}{\textup{wb}}
\renewcommand{\BibLabel}{%
    \Hy@raisedlink{\hyper@anchorstart{cite.\CurrentBib}\hyper@anchorend}%
    [\thebib]%
}
\def\lra{\longrightarrow}
\newcommand{\longhookrightarrow}{\xhookrightarrow{\hphantom{aaa}}}
\title{Etale descent obstruction and anabelian geometry of curves over finite fields}
\author{Brendan Creutz}
\address{School of Mathematics and Statistics, University of Canterbury, Private Bag 4800, Christchurch 8140, New Zealand}
\email{brendan.creutz@canterbury.ac.nz}
\author{Jos\'e Felipe Voloch}
\address{School of Mathematics and Statistics, University of Canterbury, Private Bag 4800, Christchurch 8140, New Zealand}
\email{felipe.voloch@canterbury.ac.nz}
\begin{document}


\maketitle

\begin{prelims}

\DisplayAbstractInEnglish

\bigskip

\DisplayKeyWords

\medskip

\DisplayMSCclass

\end{prelims}


\newpage

\setcounter{tocdepth}{1}

\tableofcontents


\section{Introduction}

For a smooth, proper and geometrically integral curve $X$ over a global field $k$, it is well known that the Hasse principle can fail. That is, $X$ may contain points over every completion of $k$, yet fail to have any $k$-rational point. All known examples of this phenomenon can be explained by a finite descent obstruction. This means that there is a torsor $f \colon Y \to X$ under a finite group scheme over $k$ such that no twist of $Y$ contains points over every completion. Since any $k$-rational point must lift to some twist of $f$, this yields an obstruction to the existence of $k$-rational points on $X$. A central question in the arithmetic of curves over global fields is to determine whether this is the only obstruction to the existence of $k$-rational points. 

This problem is expected to be very hard in general. For curves of genus $1$, it is equivalent to standard conjectures concerning the Tate--Shafarevich groups of elliptic curves. For curves of genus at least $2$ over number fields, it is known to follow from Grothendieck's section conjecture, but there are essentially no general results. For a discussion of the finite descent obstruction over number fields, we refer to \cite{Stoll}, which, despite being published over a decade ago, still conveys the state of the art. 

The situation is much more promising when $X$ is defined over a global function field, \textit{i.e.}, when $k = \F(D)$ is the function field of a smooth, proper and geometrically connected curve $D$ over a finite field $\F$. Building on work of Poonen--Voloch, see \cite{PV}, and R\"ossler, see \cite{Rossler}, \cite[Appendix]{CVnonisotriv}, the authors have recently completed a proof that finite descent is the only obstruction for all nonisotrivial curves of genus at least $2$;
see \cite{CVnonisotriv}. It thus remains to consider the situation for isotrivial curves. Recall that $X$ is called constant if it is isomorphic to the base change of a curve defined over $\F$, and that $X$ is called isotrivial if it becomes constant after base change to a finite extension of $k$.

We formulate a precise version of the conjecture that finite descent is the only obstruction to the existence of $k$-rational points on a constant curve over a global function field (Conjecture~\ref{conj:descent}) and prove the equivalence of this conjecture with an analogue of Grothendieck's section conjecture for curves over finite fields (see Theorem~\ref{thm:anabelian}). This enables us to use techniques from anabelian geometry which we combine with results of \cite{CV} to establish new instances of these conjectures. We prove that finite descent is the only obstruction to the existence of $k$-rational points for a constant curve $X \simeq C \times_\F k$ such that the Jacobian of $C$ is not an isogeny factor of the Jacobian of $D$ (see Theorem~\ref{thm:isog}).

\subsection{Main results and conjectures}

Let $C$ and $D$ be smooth, proper and geometrically integral curves over a finite field $\F$. We consider the arithmetic of the curve $C \times_\F K$ over the global function field $K := \F(D)$ (which we still denote by $C$ by abuse of notation). We denote by $\A_K$ the ring of ad\`eles of $K$ and consider the set $C(\A_{K})$ of adelic points of $C$, which is also the product $\prod_v C(K_v)$, where $v$ runs through the places of $K$, with its natural product topology. Let $\overline{C(K)}$ denote the topological closure of $C(K)$ inside $C(\A_K)$.

The definition of the set $C(\A_{K})^{\etale}$ of adelic points surviving descent by all torsors under finite \'etale group schemes over $K$ is recalled in Section~\ref{sec:desc}. We also consider the set $C(\A_K)^{\etale\text{-}\Br}$ of adelic points surviving the \'etale-Brauer obstruction; see \cite[Section 8.5.2]{PoonenRatPoints}. These are closed subsets of $C(\A_K)$ containing $\overline{C(K)}$. A special case of \cite[Conjecture C]{PV} implies that $\overline{C(K)} = C(\A_K)^{\etale\text{-}\Br}$. For any of the other containments in the sequence
\[
	C(K) \subset \overline{C(K)} \stackrel{?}{=} C(\A_K)^{\etale\text{-}\Br} \subset C(\A_K)^{\etale} \subset C(\A_K), 
\]
there are examples showing that, in general, they can be proper. The first will be proper when $C(K)$ is infinite, which occurs whenever there is a nonconstant morphism $\phi \in \Mor_\F(D,C) = C(K)$, as it may be composed with the Frobenius endomorphism of $C$. Examples where the third inclusion is proper are given in \cite[Proposition~4.5]{CV} and are accounted for by a descent obstruction coming from torsors under finite abelian group schemes that are not \'etale.

Despite this, it is still expected that the information obtained from $C(\A_K)^{\etale}$ should determine the set of rational points, as we now describe. For a place $v$, let $\F_v$ denote the residue field of the integer ring $\mathcal{O}_v \subset K_v$. We define $C(\A_{K,\F}) := \prod_vC(\F_v)$, which is a closed subset of $C(\A_K) = \prod_vC(K_v)$ admitting a continuous retraction $r \colon C(\A_K) \to C(\A_{K,\F})$ (see Section~\ref{notation1}). Define $C(\A_{K,\F})^{\etale} = C(\A_{K,\F}) \cap C(\A_K)^{\etale}$. Then $r(\overline{C(K}))$ is a closed subset of $C(\A_{K,\F})^{\etale}$. We conjecture the following.

\begin{Conjecture}\label{conj:descent}
We have	$r(\overline{C(K)}) = C(\A_{K,\F})^{\etale}$. In particular, $C(\A_{K,\F})^{\etale} = C(\F)$ if and only if the set $C(K) = \Mor_\F(D,C)$ contains no nonconstant morphisms.
\end{Conjecture}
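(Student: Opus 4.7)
The plan is to reduce Conjecture~\ref{conj:descent} to the anabelian statement referenced in Theorem~\ref{thm:anabelian}, which establishes a bijection between conjugacy classes of ``well-behaved'' morphisms $\pi_1(D) \to \pi_1(C)$ and locally constant adelic points of $C$ surviving \'etale descent. The philosophy is that $C(\A_{K,\F})^{\etale}$ is the natural repository for purely group-theoretic avatars of morphisms $D \to C$, and the conjecture asserts that every such avatar comes from an actual geometric morphism.

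First I would dispatch the easy inclusion $r(\overline{C(K)}) \subseteq C(\A_{K,\F})^{\etale}$. Finite \'etale descent is an obstruction to rational points, so $C(K) \subseteq C(\A_K)^{\etale}$; both $C(\A_{K,\F})$ and $C(\A_K)^{\etale}$ are closed, and $r$ is a continuous retraction onto $C(\A_{K,\F})$ that preserves the \'etale descent condition componentwise at each $v$, so $r$ sends $\overline{C(K)}$ into $C(\A_{K,\F}) \cap C(\A_K)^{\etale} = C(\A_{K,\F})^{\etale}$.

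For the reverse, hard inclusion, given a point $(P_v)_v \in C(\A_{K,\F})^{\etale}$, I would first invoke Theorem~\ref{thm:anabelian} to produce a conjugacy class of well-behaved morphisms $\pi_1(D) \to \pi_1(C)$ from the descent-surviving adelic datum, by using survival under every finite \'etale torsor to assemble compatible lifts into a global morphism of fundamental groups. Then I would apply the anabelian section conjecture for curves over finite fields, to the effect that every such open homomorphism arises from a nonconstant morphism $\phi \in \Mor_\F(D,C) = C(K)$. This $\phi$ reduces at each closed point of $D$ to give exactly the component $P_v$, so that $(P_v)_v = r(\phi) \in r(C(K)) \subseteq r(\overline{C(K)})$.

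The main obstacle is the anabelian input itself: the section conjecture for curves over finite fields is open, and this paper shows it to be equivalent to Conjecture~\ref{conj:descent}. Unconditionally, one can only prove cases in which the anabelian statement is accessible, for instance when $\Jac(C)$ is not an isogeny factor of $\Jac(D)$, as the authors do in Theorem~\ref{thm:isog}. The ``in particular'' statement then follows from the main equality: the constant morphisms $D \to C$ form $C(\F)$ and map under $r$ to the diagonal in $\prod_v C(\F_v)$, so if $\Mor_\F(D,C)$ contains only constants then $C(K) = C(\F)$ and $r(\overline{C(K)}) = C(\F)$; conversely, any nonconstant $\phi \in \Mor_\F(D,C)$ has distinct values at two closed points of $D$, producing inequivalent components of $r(\phi)$ and hence an element of $C(\A_{K,\F})^{\etale} \setminus C(\F)$.
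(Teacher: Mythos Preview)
The statement you are addressing is a \emph{Conjecture}, and the paper does not claim or provide a proof of it in general; there is therefore no proof in the paper to compare your proposal against. What you have written is not a proof but a reduction to the anabelian Hom-form conjecture for curves over finite fields, and you yourself note that this input is open. Indeed, the content of Theorem~\ref{thm:anabelian} is precisely that Conjecture~\ref{conj:descent} is \emph{equivalent} to the assertion that every well-behaved homomorphism $\pi_1(D)\to\pi_1(C)$ (up to $\pi_1(\Cbar)$-conjugacy) arises from an $\F$-morphism $D\to C$; invoking the latter to deduce the former is circular. The only unconditional instances of Conjecture~\ref{conj:descent} established in the paper are those of Theorem~\ref{thm:isog}, proved not via the anabelian conjecture but via Proposition~\ref{prop:open}, \cite[Corollary~5.3]{CV}, and Tate's theorem on homomorphisms of abelian varieties over finite fields.

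Two smaller technical remarks on your outline. First, before invoking the anabelian input you tacitly assume the well-behaved homomorphism attached to $(P_v)_v$ is open; the paper handles this via the Corollary to Proposition~\ref{prop:open}, which shows that a well-behaved homomorphism with non-open image already corresponds to a point of $C(\F)$, a case that must be treated separately. Second, in your justification of the ``in particular'' clause, the assertion that a nonconstant $\phi$ takes distinct values at two closed points of $D$ is not the correct formulation (two distinct closed points can have the same image); what is needed is simply that $r(\phi)\in C(\F)$ forces $\phi$ to factor through a single $\F$-point of $C$, hence to be constant.
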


Conjecture~\ref{conj:descent} is a nonabelian analogue of a conjecture in the number field case by Poonen, see \cite{Poonen}, in a setup first studied in~\cite{Scharaschkin}. It is equivalent, by \cite[Theorem 1.2]{CV}, to the conjecture that $\overline{C(K)} = C(\A_K)^{\etale\text{-}\Br}$. When $C$ has genus $1$, Conjecture~\ref{conj:descent} follows from the Tate conjecture for abelian varieties over finite fields. It is also known when the genera of $C$ and $D$ satisfy $g(D)< g(C)$ by \cite[Theorem~1.5]{CV}, and in some other cases where $C(K) = C(\F)$; see \cite[Theorem 2.14]{CVV}. The goal of this paper is to provide further evidence for this conjecture, by relating it to anabelian geometry. 

Fix geometric points $\overline{x} \in C(\Fbar)$ and $\overline{y} \in D(\Fbar)$, where $\Fbar$ denotes an algebraic closure of $\F$, and let $\pi_1(C) := \pi_1(C,\xbar)$ and $\pi_1(D) := \pi_1(D,\ybar)$ be the \'etale fundamental groups of $C$ and $D$ with these base points. Any morphism of curves $D\to C$ induces a morphism of \'etale fundamental groups $\pi_1(D) \to \pi_1(C)$ up to conjugation by an element of the geometric fundamental group $\pi_1(\Cbar) := \pi_1(C \times_\F \Fbar,\xbar) $. Grothendieck's anabelian philosophy suggests that, when $C$ has genus at least $2$, all open homomorphisms between the \'etale fundamental groups should arise in this way from a nonconstant morphism of schemes; see \cite{ST2009,ST2011}. In Section~\ref{sec:anabelian} we define a notion of well-behaved morphisms between fundamental groups of curves (see Definition~\ref{def:wb}). We expect all open homomorphisms are well behaved, but we have not been able to prove this. 

Our main result is the following theorem, which relates the set $C(\A_{K,\F})^{\etale}$ appearing in Conjecture~\ref{conj:descent} to an object of interest in anabelian geometry. 

\begin{Theorem}[\textit{cf.}~Theorem~\ref{thm:anabelian}]\label{thm:main}
	There is a bijection \textup{(}explicitly constructed in the proof\,\textup{)} between the set $\Hom_{\pi_1(\Cbar)}^{wb}(\pi_1(D),\pi_1(C))$ of well-behaved morphisms of fundamental groups up to $\pi_1(\Cbar)$-conjugation and the set $C(\A_{K,\F})^{\etale}$ of locally constant adelic points surviving \'etale descent.
\end{Theorem}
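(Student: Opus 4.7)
The plan is to construct mutually inverse maps between $\Hom_{\pi_1(\Cbar)}^{wb}(\pi_1(D),\pi_1(C))$ and $C(\A_{K,\F})^{\etale}$. The forward map will send a well-behaved $\phi$ to an adelic point whose component at a place $v$ is read off from the restriction of $\phi$ to a decomposition subgroup at $v$; the inverse map will assemble \'etale descent data into a compatible system of homomorphisms to finite quotients of $\pi_1(C)$, combined by an inverse limit.

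For the forward direction, each place $v$ of $K=\F(D)$ corresponds to a closed point of $D$ with residue field $\F_v$ and, after choosing a path from $\ybar$ to a geometric point above $v$, determines a decomposition subgroup $D_v \subset \pi_1(D)$ equipped with an identification $D_v \isom \Gal(\Fbar/\F_v)$. By definition of well behaved, $\phi(D_v)$ lies, up to $\pi_1(\Cbar)$-conjugation, in a decomposition subgroup of $\pi_1(C)$ at some closed point $w \in C$ with $\F_w \injects \F_v$; this is equivalent to a morphism $\Spec \F_v \to C$, hence gives a point $P_v \in C(\F_v)$, and collecting these produces an element of $C(\A_{K,\F})$. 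To see that the resulting adelic point survives \'etale descent, I would exploit the fact that a finite \'etale torsor $f\colon Y \to C_K$ under a finite $K$-group scheme $G$ is encoded by a continuous homomorphism from $\pi_1(C_K)$ to the holomorph $G\rtimes \Aut(G)$, with $K$-twists parameterised by $\HH^1(K,G)$. Composing $\phi$ with the quotient corresponding to $Y$ picks out a specific cocycle and hence a twist $Y^\sigma$, and the construction of $P_v$ from $\phi|_{D_v}$ ensures $P_v$ lifts to $Y^\sigma(\F_v) \subset Y^\sigma(K_v)$ at every $v$.

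For the inverse map, given $(P_v) \in C(\A_{K,\F})^{\etale}$ and an open normal subgroup $N \trianglelefteq \pi_1(C)$, the descent hypothesis applied to the Galois \'etale cover $C^{(N)} \to C$ with group $\pi_1(C)/N$ yields a twist $C^{(N),\sigma_N}$ lifting $(P_v)$; the class of $\sigma_N$ determines a $\pi_1(\Cbar)$-conjugacy class of homomorphisms $\phi_N\colon \pi_1(D) \to \pi_1(C)/N$. Survival under \emph{all} finite \'etale torsors forces compatibility of the $\sigma_N$ as $N$ shrinks, so $\phi = \varprojlim \phi_N$ is well defined up to $\pi_1(\Cbar)$-conjugation; local constancy of the $P_v$ then yields that $\phi$ is well behaved, and mutual invertibility of the two constructions comes down to unwinding definitions. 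The main obstacle is establishing the precise dictionary between $K$-twists of \'etale torsors over $C_K$ and homomorphisms into finite quotients of $\pi_1(C)$, and showing that the local lifting condition at each $v$ translates cleanly into the statement that $\phi(D_v)$ sits inside a decomposition subgroup of $\pi_1(C)$; the well-behavedness condition from Section~\ref{sec:anabelian} is engineered precisely to make both translations canonical, and once that is in place the bijection should follow by inspection.
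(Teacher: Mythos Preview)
Your overall architecture matches the paper's: build the map from $\phi$ to an adelic point by reading off local points from decomposition subgroups (this is the paper's Construction~\ref{construct2} and Lemma~\ref{lem:desc}), and build the inverse by assembling homomorphisms to finite quotients and passing to an inverse limit (this is Proposition~\ref{prop:constructmap}). But there are two genuine gaps in your inverse construction.

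First, the sentence ``survival under all finite \'etale torsors forces compatibility of the $\sigma_N$ as $N$ shrinks'' is not correct as stated. For a fixed open normal $N \trianglelefteq \pi_1(C)$ with quotient $G$, the \'etale descent hypothesis only tells you that \emph{some} class in $\HH^1(K,G)$ restricts everywhere to the local evaluation classes; it does not single one out. The fibres of $\HH^1(K,G) \to \prod_v \HH^1(K_v,G)$ can be nontrivial, so the $\sigma_N$ are not unique and there is no automatic compatibility. What the paper actually does is invoke the Borel--Serre theorem to conclude these fibres are \emph{finite}, so that the set $S_G$ of admissible lifts is a nonempty finite set for each $G$; then the inverse limit over $G$ is nonempty by the standard compactness argument for inverse systems of nonempty finite sets. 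You need this finiteness input---without it the inverse limit could be empty.

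Second, you assert that the resulting $\phi$ is ``well defined up to $\pi_1(\Cbar)$-conjugation'' without justification. A priori the local comparison between $\phi\circ t_v$ and the section $s_v$ coming from $x_v$ only gives conjugacy by an element of $\pi_1(C_{\F_v})$, not $\pi_1(\Cbar)$. The paper handles this by an explicit computation: if $\gamma_v \in \pi_1(C_{\F_v})$ conjugates $s_v$ to $\phi\circ t_v$, one replaces $\gamma_v$ by $\gamma_v' = \gamma_v \cdot s_v(p(\gamma_v^{-1}))$, checks that $\gamma_v' \in \pi_1(\Cbar)$, and uses the abelianness of $\Gal(\F_v)$ to verify that $\gamma_v'$ still conjugates $s_v$ to $\phi\circ t_v$. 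This step is not automatic and should be supplied. The same calculation is what yields the morphism of fundamental exact sequences (equivalently, that $\phi$ lies over the identity on $\Gal(\F)$), which you also have not addressed.
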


This theorem is a strengthening of an analogous result for curves over number fields, which shows that an adelic point surviving \'etale descent gives rise to a section of the fundamental exact sequence; see \cite{Harari-Stix,Stoll}. Combining Theorem~\ref{thm:main} with the results in \cite{CV}, we prove the following.

\begin{Theorem}\label{thm:isog}
	If the Jacobian $J_C$ of\, $C$ is not an isogeny factor of\, $J_D$, then Conjecture~\ref{conj:descent} holds for $C$ and $D$.
\end{Theorem}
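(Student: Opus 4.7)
The strategy is to translate Conjecture~\ref{conj:descent} via Theorem~\ref{thm:main} into a statement about well-behaved morphisms of fundamental groups, and then control these using Tate's isogeny theorem for abelian varieties over finite fields together with the hypothesis.

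I first dispose of the case $g(C) = 0$: here $J_C = 0$, so the hypothesis is vacuous, $C \simeq \PP^1_\F$ (since $\Br(\F) = 0$, every smooth conic over a finite field has a rational point), and $\PP^1(K)$ is dense in $\PP^1(\A_K)$, so both sides of Conjecture~\ref{conj:descent} equal $C(\A_{K,\F})$. Assume henceforth $g(C) \geq 1$. Any nonconstant morphism $\phi \colon D \to C$ induces by Albanese functoriality a surjection $\phi_* \colon J_D \twoheadrightarrow J_C$, making $J_C$ an isogeny factor of $J_D$ and contradicting the hypothesis. Hence $C(K) = \Mor_\F(D, C) = C(\F)$ is finite, so $\overline{C(K)} = C(\F)$ and $r(\overline{C(K)})$ is the diagonal $\Delta C(\F) \subset \prod_v C(\F_v)$. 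By Theorem~\ref{thm:main}, $C(\A_{K,\F})^{\etale}$ is in bijection with $\Hom^{wb}_{\pi_1(\Cbar)}(\pi_1(D), \pi_1(C))$; each $P \in C(\F)$ gives a well-behaved $\pi_1$-morphism factoring through $\pi_1(\Spec \F)$, corresponding under the bijection to $\Delta P$. The conjecture reduces to showing that every well-behaved $\pi_1$-morphism arises from such a constant morphism.

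For the remaining step, take $f \in \Hom^{wb}_{\pi_1(\Cbar)}(\pi_1(D), \pi_1(C))$. Its abelianization on the geometric parts yields a Galois-equivariant morphism of Tate modules $T_\ell J_D \to T_\ell J_C$ for every $\ell \neq \Char \F$. By Tate's isogeny theorem for abelian varieties over $\F$, these assemble into a morphism $\psi \colon J_D \to J_C$ of abelian varieties over $\F$. The hypothesis forces $\psi$ to be nonsurjective, so its image $A := \psi(J_D)$ is a proper abelian subvariety of $J_C$. Composing with the Albanese embedding gives $\psi \circ j_D \colon D \to A \hookrightarrow J_C$; since $C$, embedded in $J_C$ via Abel-Jacobi, generates $J_C$ as an algebraic group, no translate of $C$ lies inside the proper subvariety $A$. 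Therefore any morphism of schemes $D \to C$ whose composition with $j_C$ equals $\psi \circ j_D$ must be constant. Combined with the explicit construction of the bijection in Theorem~\ref{thm:main}, this forces $f$ to come from a constant morphism $D \to \{P\} \hookrightarrow C$ at some $P \in C(\F)$.

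The main obstacle is this last implication. Passing from ``any morphism $D \to C$ compatible with $\psi$ is constant'' to ``$f$ itself is induced by a constant morphism'' requires the explicit bijection in Theorem~\ref{thm:main} together with the precise form of the well-behaved condition (Definition~\ref{def:wb}) to encode not only the Tate-module data $\psi$ but also the geometric relation between $\psi \circ j_D$ and the Abel-Jacobi embedding of $C$; only then does the proper image of $\psi$ force $f$ to be the $\pi_1$-morphism of a constant map $D \to \{P\} \hookrightarrow C$.
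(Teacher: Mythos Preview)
Your proposal has a genuine gap in the final step, which you yourself flag as ``the main obstacle''---and it is not resolvable along the lines you suggest.

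The issue is this: from a well-behaved $f \colon \pi_1(D) \to \pi_1(C)$ you correctly extract a $\Gal(\F)$-equivariant map of Tate modules, and by Tate this arises from some $\psi \colon J_D \to J_C$. Under the hypothesis, $\psi$ is not surjective. But a non-surjective abelianization tells you almost nothing about $f$ itself. Your attempted bridge---``any morphism of schemes $D \to C$ whose composition with $j_C$ equals $\psi \circ j_D$ must be constant''---is a statement about \emph{morphisms of schemes}, and we have no morphism of schemes $D \to C$ in hand; producing one is exactly what we are trying to do. There is no mechanism in the explicit construction of Theorem~\ref{thm:main} that promotes the Tate-module information $\psi$ back to a constraint on the set-theoretic map $D(\Fbar) \to C(\Fbar)$ underlying the adelic point, so your last paragraph is a hope rather than an argument.

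The paper argues the contrapositive and supplies the missing surjectivity by an independent input. Starting from $(x_v) \in C(\A_{K,\F})^{\etale} \setminus C(\F)$, Proposition~\ref{prop:open} shows that the corresponding well-behaved $\phi$ has \emph{open image}, and hence that the induced Galois-equivariant map of sets $D(\Fbar) \to C(\Fbar)$ is surjective. The proof of Proposition~\ref{prop:open} is where the real work happens: one climbs an infinite tower of geometrically connected \'etale covers $C_i \to C$ containing the image of $\phi$, eventually reaching $g(C_i) > g(D)$, at which point \cite[Theorems~1.2, 1.3, 1.5]{CV} force the lifted adelic point to be constant, a contradiction. With surjectivity of $D(\Fbar) \to C(\Fbar)$ in hand, \cite[Corollary~5.3]{CV} upgrades this (purely set-theoretic!) surjection to a surjective $\Gal(\F)$-homomorphism $J_D(\Fbar) \to J_C(\Fbar)$, hence a surjection on Tate modules, and Tate's theorem then exhibits $J_C$ as an isogeny factor of $J_D$. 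Both of these inputs from \cite{CV} are essential, and neither is present in your argument.
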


In addition to establishing new instances of the conjecture, this
result allows us to relate it in the case $g(D) = g(C)$ to a recent
conjecture of Sutherland and the second author, see~\cite{SV}, which
we now recall.  We embed $C$ into its Jacobian $J_C $ by a choice of
divisor of degree $1$ (which always exists by the Lang--Weil estimates
since $C$ is defined over a finite field).  The Hilbert class field is
defined as follows.  Let $\Phi\colon J_C \to J_C$ denote the
$\F$-Frobenius map. Define $H(C) := (I-\Phi)^*(C) \subset J_C$, where
$I$ denotes the identity map on $J$. Then $H(C)$ is an unramified
abelian cover of $C$ with Galois group $J_C(\F)$, well defined up to a
twist that corresponds to a choice of divisor of degree $1$ embedding
$C$ into $J_C$. Define $H_0(C):=C$, $H_1(C):= H(C)$, and successively
define $H_{n+1}(C) := H_n(H(C))$ for integers $n\ge 1$.

\begin{Conjecture}[\textit{cf.} \protect{\cite[Conjecture 2.2]{SV}}]\label{main-conj}
Let $C,D$ be smooth projective curves of equal genus at least $2$
over a finite field~$\F$. If, for each $n$, there are choices of twists
such that the $L$-function of\, $H_n(C)$ is equal to
the $L$-function of\, $H_n(D)$ for all $n \ge 0$, then $C$ is isomorphic to a conjugate of\, $D$.
\end{Conjecture}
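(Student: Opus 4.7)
The plan is to translate the arithmetic input (equality of $L$-functions at every level of the Hilbert class field tower) into anabelian input, feed it through Theorem~\ref{thm:main}, and close the argument using the equal-genus hypothesis via Riemann--Hurwitz.

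By Honda--Tate, the equality of $L$-functions at level $n$ is equivalent to an isogeny of Jacobians $J_{H_n(C)} \sim J_{H_n(D)}$ over $\F$. The tower structure, recorded in the successive extensions $\Gal(H_{n+1}/H_n) \cong J_{H_n}(\F)$, organises these isogenies into a compatible system. Passing to an inverse limit along the tower, one expects to extract an isomorphism of the relevant pro-abelian pieces of $\pi_1(\Cbar)$ and $\pi_1(\Dbar)$ respecting the geometric Frobenius action, provided the twists at each level can be chosen coherently.

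Next, one would lift this pro-abelian datum to a well-behaved homomorphism $\pi_1(D) \to \pi_1(C)$ in the sense of Definition~\ref{def:wb}. Once such a homomorphism is produced, Theorem~\ref{thm:main} yields a locally constant adelic point in $C(\A_{K,\F})^{\etale}$ with $K := \F(D)$. If, in the present case $J_C \sim J_D$, one can verify Conjecture~\ref{conj:descent}, then this point lifts to a rational point of $C(K) = \Mor_\F(D,C)$, that is, to a nonconstant $\F$-morphism $\phi \colon D \to C$. Because $g(C) = g(D) \geq 2$, Riemann--Hurwitz
\[
  2g(D) - 2 = \deg(\phi)\bigl(2g(C) - 2\bigr) + \deg R_\phi
\]
forces $\deg \phi = 1$ with empty ramification divisor, so $\phi$ is an $\F$-isomorphism; in particular $C$ is isomorphic to a conjugate of $D$.

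The main obstacles are (i) lifting the pro-abelian isomorphism of Step~2 to a nonabelian well-behaved homomorphism of fundamental groups, and (ii) verifying Conjecture~\ref{conj:descent} in the case $J_C \sim J_D$ that Theorem~\ref{thm:isog} leaves open. A promising line of attack is to avoid producing the abelian-to-nonabelian lift abstractly, and instead to exploit the full strength of the hypothesis---$L$-function equality at \emph{every} level---to constrain geometric $\Fbar$-morphisms $\Dbar \to \Cbar$ directly, for example via point-counting congruences along the tower combined with a Galois-equivariance argument that descends from $\Fbar$ to $\F$. A natural preliminary move is to seek some $n$ for which $J_{H_n(C)}$ admits a simple isogeny factor that is \emph{not} an isogeny factor of $J_{H_n(D)}$, bringing Theorem~\ref{thm:isog} into play at that level; unfortunately the tower hypothesis appears to preclude this precisely when it would be useful, which is why the conjecture is expected to be genuinely hard.
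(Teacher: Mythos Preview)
The statement you are attempting to prove is labelled \emph{Conjecture} in the paper, not Theorem, and the paper contains no proof of it. It is quoted from \cite{SV} as an open problem and then used as a \emph{hypothesis} in Theorem~\ref{thm:1implies2}. So there is no ``paper's own proof'' to compare your attempt against.

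More substantively, your logical direction is inverted relative to what the paper establishes. The paper proves that Conjecture~\ref{main-conj} implies (the nontrivial direction of) Conjecture~\ref{conj:descent} in the equal-genus case: assuming Conjecture~\ref{main-conj}, one shows that a nonconstant point in $C(\A_{K,\F})^{\etale}$ forces a nonconstant morphism $D \to C$. Your outline goes the other way: you try to deduce Conjecture~\ref{main-conj} from Conjecture~\ref{conj:descent} (your step labelled (ii)). Even if your argument worked, it would only show the two conjectures are equivalent in this range, not that either holds. And since Theorem~\ref{thm:isog} covers exactly the case where $J_C$ is \emph{not} an isogeny factor of $J_D$, while your hypothesis forces $J_C \sim J_D$ already at level $0$, you are squarely in the case the paper leaves open, as you yourself observe.

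The two obstacles you flag, (i) lifting a pro-abelian isomorphism to a well-behaved nonabelian homomorphism $\pi_1(D)\to\pi_1(C)$, and (ii) verifying Conjecture~\ref{conj:descent} when $J_C \sim J_D$, are not technicalities but the entire content of the problem. Obstacle (i) in particular is a form of the anabelian reconstruction problem: knowing the abelianized tower does not, by any known mechanism, pin down the full profinite fundamental group or produce a homomorphism between two of them. Your closing paragraph correctly diagnoses that the hypothesis blocks the one tool (Theorem~\ref{thm:isog}) that would otherwise apply. What you have written is a reasonable heuristic for why the conjecture is plausible and how it interlocks with the other results of the paper, but it is not a proof, and the paper does not claim one.
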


\begin{Theorem}\label{thm:1implies2}
	Suppose $g(C) = g(D) \ge 2$ and assume Conjecture~\ref{main-conj}. Then $C(\A_{K,\F})^{\etale}\ne C(\F)$ if and only if there is a nonconstant morphism $D \to C$.
\end{Theorem}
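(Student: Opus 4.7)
The ``if'' direction is routine. A nonconstant $\phi\in\Mor_\F(D,C)=C(K)$ reduces at each place $v$ to $\phi(v)\in C(\F_v)$, the image under $\phi$ of the closed point of $D$ corresponding to $v$. Closed points are Zariski-dense in $D$, so the values $\phi(v)$ cannot all coincide with a single element of $C(\F)$; hence $r(\phi)\in C(\A_{K,\F})$ is not in the diagonal image of $C(\F)$. Since $\phi\in C(K)\subset\overline{C(K)}\subset C(\A_K)^{\etale}$, this gives $C(\A_{K,\F})^{\etale}\ne C(\F)$.

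For the ``only if'' direction I argue contrapositively: assume no nonconstant morphism $D\to C$ exists, and deduce $C(\A_{K,\F})^{\etale}=C(\F)$. Split on whether $J_C$ is an isogeny factor of $J_D$. If not, Theorem~\ref{thm:isog} asserts Conjecture~\ref{conj:descent} for the pair, so $C(\A_{K,\F})^{\etale}=r(\overline{C(K)})$; since $C(K)=\Mor_\F(D,C)=C(\F)$ (diagonally), which is finite and closed, this equals $C(\F)$.

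The remaining case is $J_C\sim J_D$, forced by $g(C)=g(D)$ together with the isogeny-factor hypothesis. The absence of a nonconstant morphism $D\to C$ rules out $C\cong D^\sigma$ for every $\sigma\in\Gal(\F/\F_p)$: otherwise, composition with an appropriate power of the relative Frobenius $D\to D^\sigma$ would produce one, using that in characteristic $p$ a separable factor of such a morphism must be an isomorphism by Riemann--Hurwitz applied to $g(C)=g(D)\ge 2$. By the contrapositive of Conjecture~\ref{main-conj}, there exists $n\ge 0$ for which no choice of twists makes $L(H_n(C),s)=L(H_n(D),s)$. Suppose for contradiction $C(\A_{K,\F})^{\etale}\ne C(\F)$; by Theorem~\ref{thm:main} there is a well-behaved $\phi\colon\pi_1(D)\to\pi_1(C)$ not $\pi_1(\Cbar)$-conjugate to any homomorphism coming from a point of $C(\F)$. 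The plan is to use $\phi$, together with the isogeny $J_C\sim J_D$, to build inductively at each level a twist choice of $H_n(D)$ whose Jacobian is isogenous to that of the corresponding twist of $H_n(C)$: pull back $H_n(C)\to C$ along $\phi$ to an \'etale cover of $D$, transport the defining degree-$1$ divisor class via $\phi$ to obtain the matching twist of $H_n(D)$, and restrict $\phi$ to the relevant finite-index subgroup to iterate up the tower. Well-behavedness of $\phi$ supplies the Frobenius-equivariance needed at each level, so Tate's theorem on isogenies of abelian varieties over finite fields yields $L(H_n(C),s)=L(H_n(D),s)$ for all $n$ after these twist choices, contradicting the above.

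The principal obstacle is this last step: verifying that a single well-behaved morphism of fundamental groups propagates up the Hilbert class field tower with compatible twist choices, so that the induced Jacobian isogenies persist at every level. This will require a careful unpacking of the well-behavedness condition from Section~\ref{sec:anabelian} and its compatibility with the Frobenius action on $\pi_1(\Cbar)$ and $\pi_1(\Dbar)$. Once this propagation is in place, the contradiction with the contrapositive of Conjecture~\ref{main-conj} completes Case~B and the proof.
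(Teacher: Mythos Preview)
Your overall strategy for the ``only if'' direction is correct: propagate a nonconstant unobstructed adelic point up the Hilbert class field tower so that $L(H_n(C))=L(H_n(D))$ for all $n$, then invoke Conjecture~\ref{main-conj}. But you leave the propagation step as an explicit gap, and your plan to execute it via the well-behaved homomorphism $\phi\colon\pi_1(D)\to\pi_1(C)$ is harder than necessary. The paper never passes through Theorem~\ref{thm:main} here; it stays entirely in the descent framework. Given $x=(x_v)\in C(\A_{K,\F})^{\etale}\setminus C(\F)$, the torsor $H(C)\to C$ under $J_C(\F)$ is \'etale, so by the very definition of surviving \'etale descent there is a class $\xi\in\HH^1(K,J_C(\F))$ whose twist of $H(C)$ contains a lift of $x$. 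The key observation you are missing is that $\xi$ is everywhere unramified (locally it is cut out by $(I-\Phi)(y)=x_v$ with $x_v\in C(\F_v)$) and abelian, so the fixed field $L$ of $\ker(\xi)$ embeds into $K'=\F(H(D))$ for a suitable embedding $D\hookrightarrow J_D$. Over $K'$ the twist becomes trivial; combined with \cite[Proposition~5.15]{Stoll} (base change preserves $\etale$-unobstructedness), this puts a nonconstant lift of $x$ in $H(C)(\A_{K',\F})^{\etale}$, and one is back in the same situation with $H(C),H(D)$ in place of $C,D$.

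At each level the $L$-function equality then follows from (the contrapositive form of the proof of) Theorem~\ref{thm:isog}: a nonconstant unobstructed point forces $J_{H_n(C)}$ to be an isogeny factor of $J_{H_n(D)}$, and the equigenus hypothesis propagates because $L(H_{n-1}(C))=L(H_{n-1}(D))$ gives $|J_{H_{n-1}(C)}(\F)|=|J_{H_{n-1}(D)}(\F)|$, whence $g(H_n(C))=g(H_n(D))$ by Riemann--Hurwitz. Your case split into ``$J_C$ an isogeny factor of $J_D$ or not'' is also unnecessary: Case~A is subsumed, since if $J_C$ is not an isogeny factor then Theorem~\ref{thm:isog} already rules out any nonconstant $x$. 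Your $\pi_1$-approach could be pushed through, but ``$\phi^{-1}$ of the subgroup corresponding to $H(C)$ is a twist of $H(D)$'' unwinds to exactly the unramified-abelian observation above, just phrased less transparently.
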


\subsection*{Acknowledgements} The authors thank Jakob Stix for suggestions leading to the proof of Proposition~\ref{prop:open} and for a correction to Remark~\ref{rem:poorbehaviour}. 

	\section{Notation and preliminaries}\label{sec:notation}
	
	\subsection{Notation}\label{notation1}
	The set of places of the global field $K = \F(D)$ is in bijection with the set $D^1$ of closed points of $D$. Given $v \in D^1$, we use $K_v$, $\mathcal{O}_v$ and $\F_v$ to denote the corresponding completion, ring of integers and residue field, respectively. Fix a separable closure $K^{\sep}$ of $K$, and let $\Fbar$ denote the algebraic closure of $\F$ inside $K^{\sep}$. For each $v \in D^1$, fix a separable closure $K_v^{\sep}$ of $K_v$ and an embedding $K^{\sep} \hookrightarrow K_v^{\sep}$. This determines an embedding $\F_v \subset \Fbar$ and an inclusion $\theta_v\colon \Gal(K_v) \to \Gal(K)$. The embedding $\F_v \subset \Fbar$ fixes a geometric point $\overline{v} \in D(\Fbar)$ in the support of the closed point $v \in D$. The inclusions $\F \subset \F_v \subset \mathcal{O}_v \subset K_v$ endow $\mathcal{O}_v, K_v$ and the adele ring $\A_K$ with the structure of an $\F$-algebra. 
We define the locally constant adele ring $\A_{K,\F} := \prod_{v \in D^1} \F_v$. This is an $\F$-subalgebra of the adele ring $\A_K$.
	 
	 The constant curve $C \times_{\Spec(\F)} \Spec(K)$ spreads out to a smooth proper model $C \times_{\Spec(\F)} D$ over $D$. For any $v \in D^1$, this gives a reduction map $r_v \colon C(K_v) \to C(\F_v)$. Since $C$ is proper, $C(\A_K) = \prod C(K_v)$ and the reduction maps give rise to a continuous projection $r \colon C(\A_K) \to C(\A_{K,\F})$ sending $(x_v)$ to $(r_v(x_v))$.

Any locally constant adelic point $(x_v) \in C(\A_{K,\F})$ determines a unique Galois equivariant map of sets $\psi \colon D(\Fbar) \to C(\Fbar)$ with the property that $\phi(\overline{v}) = x_v$. This induces a bijection $C(\A_{K,\F}) \leftrightarrow \Map_{G_\F}(D(\Fbar),C(\Fbar))$. Moreover, a locally constant adelic point on $C$ determines, and is uniquely determined by, a map $f \colon D^1 \to C^1$ together with an embedding $\F_{f(v)} \subset \F_v$ for each $v \in D^1$ (see \cite[Lemma~2.1]{CV}).

\begin{Lemma}\label{lem:adelicptmap}
	The composition $C(K) \to C(\A_{K}) \stackrel{r}\to C(\A_{K,\F})$ is injective. Composing this with the map $C(\A_{K,\F}) \to \Map(D^1,C^1)$ induces an injective map
	$C(K)/F\to \Map(D^1,C^1)$, where $C(K)/F$ denotes the set of $K$-rational points up to Frobenius twist; i.e., $P \sim Q$ if and only if there are $m,n \ge 0$ such that $F^mP = F^nQ$.
\end{Lemma}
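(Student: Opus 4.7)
I would split the lemma into its two injectivity assertions and tackle them separately.

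For the injectivity of $C(K) \to C(\A_{K,\F})$, I would identify $C(K) = \Mor_\F(D, C)$ via the valuative criterion (using properness of $C$ and smoothness of $D$), so that the composition sends $P \colon D \to C$ to the tuple $(P(v))_{v \in D^1}$. The argument then reduces to the observation that two morphisms $P, Q \colon D \to C$ agreeing as $\F_v$-valued points at every closed $v \in D^1$ must coincide: the equalizer $\{P = Q\} \subseteq D$ is a closed subscheme (since $C$ is separated), contains every closed point of $D$, and these are Zariski dense in the reduced curve $D$.

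For the induced map $C(K)/F \to \Map(D^1, C^1)$, I would first dispatch the constant case: any constant $\phi \colon D \to C$ factors through $\Spec \F_c \to C$ for some $c \in C$, and since $\Gamma(D, \mathcal{O}_D) = \F$ as $D$ is geometrically integral, $\F_c = \F$ and $c \in C(\F)$; the hypothesis then pins both $\phi, \psi$ to the same $\F$-point. The substantive case is when $\phi, \psi$ are both non-constant; here I would form $(\phi, \psi) \colon D \to C \times C$ and let $W \subseteq C \times C$ be its image, an irreducible closed subvariety of dimension $1$ (both projections $W \to C$ are dominant by non-constancy, and $W$ is the image of the $1$-dimensional $D$). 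Introduce the closed subschemes $Z_k \subseteq C \times C$ defined, for $k \ge 0$, as the graph of the Frobenius power $F^k \colon C \to C$ and, for $k < 0$, as the transpose graph; each $Z_k$ is an irreducible curve isomorphic to $C$. The hypothesis $\phi_{\mathrm{cl}} = \psi_{\mathrm{cl}}$ says that for every $\bar v \in D(\Fbar)$, the points $\phi(\bar v), \psi(\bar v) \in C(\Fbar)$ lie over a common closed point of $C$ and hence are Galois conjugate over $\F$, yielding $(\phi(\bar v), \psi(\bar v)) \in Z_{k(\bar v)}(\Fbar)$ for some integer $k(\bar v)$. Since $(\phi, \psi) \colon D \twoheadrightarrow W$ is surjective on $\Fbar$-points (proper dominant map to a $1$-dimensional target), this gives $W(\Fbar) \subseteq \bigcup_{k \in \Z} Z_k(\Fbar)$, and the goal reduces to concluding that $W = Z_k$ as closed subschemes for some integer $k$---whence $\psi = F^k \circ \phi$ (or $\phi = F^{-k} \circ \psi$) and hence Frobenius equivalence follow.

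The hardest step is this last geometric conclusion. A pure cardinality argument does not suffice: each $W \cap Z_k$ is finite when $W \ne Z_k$, but $W(\Fbar)$ is countably infinite over the algebraic closure of a finite field, so the countable union $\bigcup_k (W \cap Z_k)(\Fbar)$ could a priori exhaust $W(\Fbar)$. I expect one instead needs quantitative input: intersection-theoretic bounds $|(W \cap Z_k)(\Fbar)| \le W \cdot Z_k$ on the smooth projective surface $C \times C$ (with $W \cdot Z_k$ growing linearly in $q^k$, via the Künneth decomposition of $\NS(C \times C)$), together with Lang--Weil estimates on $|W(\F_{q^n})|$ and a careful summation over the residues $k \in \{0, \dots, n-1\}$ that can arise for $\F_{q^n}$-points of $W$, combining to produce a contradiction for $n$ sufficiently large unless $W$ coincides with some $Z_k$. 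The detailed execution of this point-counting argument is the main content of \cite[Lemma~2.1]{CV}.
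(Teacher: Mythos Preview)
Your treatment of the first injectivity is correct and matches the paper's one-line proof (which cites the standard fact that a morphism from a geometrically reduced scheme is determined by its values on points; your equalizer/density argument is precisely how that fact is proved).

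For the second assertion your approach diverges from the paper's: the paper simply cites \cite[Proposition~2.3]{Stix02}, whereas you set up a direct geometric argument via the image curve $W \subset C \times C$ and the Frobenius graphs $Z_k$. Your reduction to the claim ``$W = Z_k$ for some $k$'' is correct, and you rightly flag this as the crux. However, the point-counting sketch does not actually yield a contradiction. Computing on $C \times C$ one finds
\[
W \cdot Z_k = d_1\, q^k + d_2 + O\!\left(q^{k/2}\right),
\]
where $d_i = W \cdot f_i$ are the bidegrees of $W$, so that
\[
\sum_{k=0}^{n-1} W \cdot Z_k \;=\; \frac{d_1}{q-1}\,q^n + O\!\left(q^{n/2}\right),
\]
while Weil's bound gives $|W(\F_{q^n})| = q^n + O(q^{n/2})$. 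The leading terms are of the same order, and the inequality you obtain in the limit is merely $q-1 \le d_1$, which is no contradiction once $d_1$ is moderately large. Passing to a larger constant field $\F_{q^m}$ does not rescue this, because the hypothesis ``$\phi$ and $\psi$ induce the same map on closed points over $\F$'' does not imply the same over $\F_{q^m}$: a single $\Gal(\Fbar/\F)$-orbit may split into several $\Gal(\Fbar/\F_{q^m})$-orbits that $\phi$ and $\psi$ permute differently.

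Finally, the reference you invoke for the ``detailed execution'' is not the right one: \cite[Lemma~2.1]{CV} establishes the bijection between $C(\A_{K,\F})$ and maps $D^1 \to C^1$ together with residue-field embeddings, which is the statement used just before the present lemma and is not the Frobenius-rigidity result you need here. The correct input is \cite[Proposition~2.3]{Stix02}, as in the paper; Stix's argument proceeds via the \'etale fundamental group rather than intersection theory on $C \times C$.
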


\begin{proof}
	The first statement follow from the fact (\textit{e.g.}, \cite[Exercise 5.17]{AGI}) that a morphism defined on a geometrically reduced variety is determined by what it does to geometric points. For the second statement, see \cite[Proposition 2.3]{Stix02}.
\end{proof}

The set $C(K)/F$ is finite by the theorem of de Franchis \cite[Chapter~8, pp.~223-224]{Lang}.
Over a finite field $\F$, there is a simpler proof. The degree of a separable map $D \to C$ is bounded by Riemann--Hurwitz. Looking  at coordinates of an embedding of $C$, it now suffices to show that there are only finitely many functions on $D/\F$ of degree bounded by some $m$. The zeros and poles of such a function have degree at most $m$ over $\F$, so there are only finitely choices for the divisor of such a function. Finally, the function itself is determined up to a scalar in $\F^*$ by its divisor, but $\F^*$ is finite by hypothesis.

\subsection{Etale descent obstruction}\label{sec:desc}

	Let $f\colon C'\to C$ be a torsor under a finite \'etale group scheme $G/K$. We use $\HH^1(K,G)$ to denote the \'etale cohomology set parameterizing isomorphism classes of $G$-torsors over $K$ (and similarly with $K$ replaced by $K_v, \calO_v, \F_v$, etc.). The distinguished element of this pointed set is represented by the trivial torsor. 
	
	Following the terminology in~\cite{Stoll}, we say an adelic point $(x_v) \in C(\A_K)$ survives $f$ if the element of $\prod_v \HH^1(K_v,G)$ given by evaluating $f$ at $(x_v)$ lies in the image of the diagonal map
        $$
        \HH^1(K,G) \stackrel{\prod{\theta_v^*}}\To \prod_{v\in D^1} \HH^1(K_v,G).
        $$
        Equivalently, $(x_v)$ survives $f$ if and only if $(x_v)$ lifts to an adelic point on some twist of $f$ by a cocycle representing a class in $\HH^1(K,G)$. We use $C(\A_{K})^{\etale}$ to denote the set of adelic points surviving all $C$-torsors under \'etale group schemes over $K$. Then $C(\A_K)^{\etale}$ is a closed subset of $C(\A_K)$ containing $C(K)$. We define $C(\A_{K,\F})^{\etale} = C(\A_{K})^{\etale} \cap C(\A_{K,\F})$. By~\cite[Proposition 4.6]{CV}, an adelic point lies in $C(\A_K)^{\etale}$ if and only if its image under the reduction map  $r \colon C(\A_K) \to C(\A_{K,\F})$ lies in $C(\A_{K,\F})^{\etale}$.
	
The following lemma is a special case of a well-known statement in \'etale cohomology over a henselian ring (\textit{cf.} \cite[Remark 3.11(a) on p. 116]{Milne}).

\begin{Lemma}\label{lem:unr}
	For an \'etale group scheme $G$ over $\F$, we have $\HH^1(\mathcal{O}_v,G) = \HH^1(\F_v,G)$.
\end{Lemma}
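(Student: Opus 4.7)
The plan is to reduce the statement to a purely group-theoretic comparison via étale fundamental groups. Since $\mathcal{O}_v$ is a complete discrete valuation ring, it is henselian, and by a standard consequence of Hensel's lemma (see SGA 1, Exposé I), the specialization functor from finite étale schemes over $\Spec(\mathcal{O}_v)$ to finite étale schemes over $\Spec(\F_v)$ is an equivalence of categories. Picking the geometric point $\overline{v} \in \Spec(\F_v) \hookrightarrow \Spec(\mathcal{O}_v)$ as base point, this equivalence implies that the induced map
\[
\pi_1^{\etale}(\Spec(\F_v),\overline{v}) \longrightarrow \pi_1^{\etale}(\Spec(\mathcal{O}_v),\overline{v})
\]
is an isomorphism, both sides being identified with $\Gal(\Fbar_v/\F_v) \simeq \widehat{\Z}$.

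Next I would interpret both cohomology sets as classifying $G$-torsors in the respective étale topologies. Because $G$ is finite étale over $\F$ (and hence over $\mathcal{O}_v$ and $\F_v$ by base change), every $G$-torsor over $\Spec(\mathcal{O}_v)$ is representable by a finite étale cover of $\Spec(\mathcal{O}_v)$ equipped with a $G$-action (and similarly over $\Spec(\F_v)$). Under the equivalence of categories recalled above, the group structure on $G$ and on its base changes corresponds, so $G$-actions and $G$-torsor structures correspond as well. Therefore specialization induces a bijection between isomorphism classes of $G$-torsors over $\Spec(\mathcal{O}_v)$ and over $\Spec(\F_v)$, which is precisely the claimed equality $\HH^1(\mathcal{O}_v,G) = \HH^1(\F_v,G)$.

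Equivalently, one can appeal to the description of $\HH^1$ with coefficients in a locally constant finite étale group scheme as continuous cocycles from the étale fundamental group into $G(\Fbar_v)$; the isomorphism of fundamental groups above then immediately yields the identification. There is no serious obstacle here: the only nontrivial input is the equivalence of categories of finite étale covers, which uses that $\mathcal{O}_v$ is henselian, and this is exactly the content of the reference to \cite[Remark 3.11(a) on p. 116]{Milne} cited in the excerpt.
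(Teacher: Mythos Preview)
Your argument is correct. Both your proof and the paper's hinge on Hensel's lemma, but they organize the argument differently. You invoke the full equivalence between finite \'etale covers of $\Spec(\mathcal{O}_v)$ and of $\Spec(\F_v)$ (equivalently, the isomorphism $\pi_1(\Spec(\F_v)) \simeq \pi_1(\Spec(\mathcal{O}_v))$), and then deduce the bijection on torsors from this categorical equivalence. The paper instead argues directly on the map $q_* \colon \HH^1(\mathcal{O}_v,G) \to \HH^1(\F_v,G)$ induced by reduction: injectivity follows from Hensel's lemma, and surjectivity is obtained by exploiting the explicit section $i \colon \F_v \hookrightarrow \mathcal{O}_v$ (available here because $\mathcal{O}_v$ is an $\F$-algebra containing its residue field), so that $q \circ i = \mathrm{id}$ forces $q_*$ to be surjective. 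Your route is a touch more conceptual and works verbatim over any henselian local ring; the paper's is shorter and more elementary, using the specific equal-characteristic feature that the residue field splits off.
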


\begin{proof}
	The canonical surjection $q \colon \mathcal{O}_v \to \F_v$ induces a map $q_* \colon \HH^1(\mathcal{O}_v,G) \to \HH^1(\F_v,G)$. This map is injective by Hensel's lemma. On the other hand, the inclusion $i\colon \F_v \to \mathcal{O}_v$ satisfies $q \circ i = \textup{id}$. It follows that $q_*$ must also be surjective. 
\end{proof}

An element of $\HH^1(K_v,G)$ is called unramified if it lies in the image of the map $\HH^1(\mathcal{O}_v,G) \to \HH^1(K_v,G)$ induced by the inclusion $\mathcal{O}_v \subset K_v$. Thus, the lemma identifies $\HH^1(\F_v,G)$ with the set of unramified elements in $\HH^1(K_v,G)$.

\section{Connection to anabelian geometry}\label{sec:anabelian}
	 
	  Fix a base point $\xbar \colon \Spec \Fbar \to \Dbar := D \times_{\Spec(\F)} \Spec(\Fbar)$. Composing with the canonical maps $\Dbar \to D$ and $D \to \Spec(\F)$, this serves as well to fix base points of $D$ and $\Spec(\F)$. The base point of $\Spec(\F)$ agrees with that determined by the algebraic closure $\F \subset \Fbar$ fixed above. This leads to the fundamental exact sequence
\begin{equation}\label{FES}
	1 \lra \pi_1(\Dbar) \lra \pi_1(D) \lra \Gal(\F) \lra 1,
\end{equation}
where $\pi_1(-)$ denotes the \'etale fundamental group with base point as chosen above. A choice of base point $\Spec\Fbar \to \Cbar$ determines a similar sequence for $C$.

The choice of separable closure of $K$ identifies $\pi_1(D)$ with the Galois group of the maximal extension $K^{\unr}$ of $K$ which is everywhere unramified. For each closed point $v \in D^1$, the embedding $\theta_v \colon \Gal(K_v) \to \Gal(K^{\sep})$ induces a section map $t_v\colon\Gal(\F_v) \simeq \Gal(K_v^{\unr}|K_v) \to \pi_1(D)$ whose image is a decomposition group $T_v \subset \pi_1(D)$ above $v$.

\begin{Definition}\label{def:wb}
	A continuous morphism $\pi_1(D) \to \pi_1(C)$ is \defi{well behaved} if every decomposition group of $\pi_1(D)$ is mapped to an open subgroup of a decomposition group of $\pi_1(C)$. Let $\Hom^{\wb}(\pi_1(D),\pi_1(C))$ denote the set of well-behaved homomorphisms of profinite groups, and for a subgroup $H < \pi_1(C)$, let $\Hom^{\wb}_{H}(\pi_1(D),\pi_1(C))$ denote the quotient of $\Hom^{\wb}(\pi_1(D),\pi_1(C))$ by the action given by composition with an inner automorphism of $\pi_1(C)$ coming from an element of $H$.
\end{Definition}

\begin{Remark}\label{rem:poorbehaviour}
	Here is an example of a poorly behaved homomorphism. Suppose the genus of $C$ is at least~$2$. By~\cite[Theorem 226]{StixBook}, there are uncountably many sections $\Gal(\F) \to \pi_1(C)$ that are not conjugate to any section coming from a point in $C(\F)$. Composing such a section with the canonical surjection $\pi_1(D) \to \Gal(\F)$ gives a continuous morphism $\pi_1(D) \to \pi_1(C)$ that is not well behaved.
\end{Remark}

\begin{Proposition}\label{prop:constructmap}
	Suppose $(x_v) \in C(\A_{K,\F})^{\etale}$. For each $v \in D^1$, let $S_v \subset \pi_1(C_{\F_v}) \subset \pi_1(C)$ be a decomposition group above the closed point $x_v \in C_{\F_v}$. Then there exists a well-behaved homomorphism $\phi\colon \pi_1(D) \to \pi_1(C)$ inducing a morphism of exact sequences
	\[
		\xymatrix{
			1 \ar[r] & \pi_1(\Dbar) \ar[d]^\phi \ar[r] & \pi_1(D) \ar[d]^\phi \ar[r] & \Gal(\F) \ar@{=}[d] \ar[r] & 1\\
			1 \ar[r] & \pi_1(\Cbar)  \ar[r] & \pi_1(C)  \ar[r] & \Gal(\F) \ar[r] & 1
		}
	\]
	such that, for each $v \in D^1$, there exists a $\gamma_v \in \pi_1(\Cbar)$ such that $\phi(T_v) = \gamma_v(S_v)\gamma^{-1}_v$.
\end{Proposition}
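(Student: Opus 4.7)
The plan is to construct $\phi$ as an inverse limit of finite-level homomorphisms, following the strategy of Harari--Stix in the number field analogue. For each open normal subgroup $N \triangleleft \pi_1(C)$, set $G_N := \pi_1(C)/N$ and $\overline{G}_N := \pi_1(\Cbar)/(N \cap \pi_1(\Cbar))$, so that $G_N$ is an extension of a finite quotient $U_N$ of $\Gal(\F)$ by $\overline{G}_N$. Let $\pi_N\colon \pi_1(C) \twoheadrightarrow G_N$ denote the projection, and let $\Phi_N$ denote the set of $\overline{G}_N$-conjugacy classes of continuous homomorphisms $\phi_N\colon \pi_1(D) \to G_N$ that lift the composition $\pi_1(D) \twoheadrightarrow \Gal(\F) \twoheadrightarrow U_N$ and satisfy $\phi_N(T_v) = \gamma_{v,N} \pi_N(S_v) \gamma_{v,N}^{-1}$ for some $\gamma_{v,N} \in \overline{G}_N$ at every $v \in D^1$. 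A compatible family $(\phi_N)_N \in \varprojlim_N \Phi_N$ assembles into the desired $\phi$.

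For existence at a single level $N$, the quotient $\pi_N$ corresponds to a finite \'etale Galois cover $\widetilde{C}_N \to C$ over $\F$, which is a torsor under the \'etale $\F$-group scheme $\overline{G}_N$. Base changing to $K$ and evaluating at $(x_v)$, each local class $[(\widetilde{C}_N)_{x_v}] \in \HH^1(K_v, \overline{G}_N)$ is unramified by Lemma~\ref{lem:unr} and equals the image of a class $\alpha_{v,N} \in \HH^1(\F_v, \overline{G}_N)$ determined by the decomposition group $S_v$. The \'etale descent hypothesis produces a global class $\tau_N \in \HH^1(K, \overline{G}_N)$ whose localization at every $v$ equals $\alpha_{v,N}$; being everywhere unramified, $\tau_N$ lies in $\HH^1(\pi_1(D), \overline{G}_N)$. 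Twisting a set-theoretic lift of $\pi_1(D) \to U_N$ through $G_N$ (assembled from the local splittings provided by the $S_v$'s) by a cocycle representing $\tau_N$ yields the required $\phi_N$; the prescribed behaviour of $\phi_N$ at each $T_v$ is forced by the equality $\tau_N|_{K_v} = \alpha_{v,N}$.

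The main obstacle is ensuring compatibility across levels. The nonuniqueness of $\tau_N$ is governed by $\Sha^1(K, \overline{G}_N) := \ker\bigl(\HH^1(K, \overline{G}_N) \to \prod_v \HH^1(K_v, \overline{G}_N)\bigr)$, which may be nontrivial, so $\Phi_N$ can have more than one element. Nevertheless, each $\Phi_N$ is finite and nonempty, and $\{\Phi_N\}_N$ is a projective system under the natural transition maps induced by the quotients $G_N \twoheadrightarrow G_{N'}$ for $N \subseteq N'$. By compactness of inverse limits of nonempty finite sets, $\varprojlim_N \Phi_N$ is nonempty, and any element yields the desired $\phi\colon \pi_1(D) \to \pi_1(C)$. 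The morphism-of-exact-sequences property and the conclusion $\phi(T_v) = \gamma_v S_v \gamma_v^{-1}$ pass to the limit by construction, giving the well-behavedness claim.
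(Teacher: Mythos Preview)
Your outline follows the Harari--Stix template and is close in spirit to the paper's proof, but there is a genuine gap at the heart of the finite-level construction. You assert that the cover $\widetilde{C}_N \to C$ attached to $N \triangleleft \pi_1(C)$ is a torsor under the \'etale $\F$-group scheme $\overline{G}_N$. It is not: it is a torsor under the \emph{constant} group scheme $G_N$, of degree $|G_N|$, and in general $|G_N| > |\overline{G}_N|$. Applying the \'etale descent hypothesis to this torsor therefore yields a global class $\tau_N \in \HH^1(K,G_N)$, not in $\HH^1(K,\overline{G}_N)$, and the local agreement it guarantees is only up to $G_N$-conjugacy. Passing to the limit, this gives $\phi(T_v)$ conjugate to $S_v$ by an element of $\pi_1(C)$, not of $\pi_1(\Cbar)$. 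The sentence ``Twisting a set-theoretic lift \ldots by a cocycle representing $\tau_N$'' does not repair this: to twist into a section over $U_N$ you would first need a $\overline{G}_N$-valued cocycle, which you have not produced.

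This is exactly the point the paper isolates as the new difficulty (see the Remark following the proof). The paper's argument runs the inverse limit with the full finite quotient $G$ as a constant group scheme, obtaining first that $\phi \circ t_v$ and $s_v$ are conjugate by some $\gamma_v \in \pi_1(C_{\F_v})$; it then performs an explicit correction, replacing $\gamma_v$ by $\gamma_v' := \gamma_v \cdot s_v(p(\gamma_v^{-1}))$, and checks by direct computation (using that $\Gal(\F_v)$ is abelian) that $\gamma_v' \in \pi_1(\Cbar)$ and still conjugates $s_v$ to $\phi \circ t_v$. Your proposal omits any analogue of this step. A secondary issue: your $\Phi_N$ is a set of $\overline{G}_N$-conjugacy \emph{classes}, so an element of $\varprojlim_N \Phi_N$ is a compatible system of classes, not of homomorphisms; extracting an actual $\phi$ requires a further compactness argument that you should state. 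The paper sidesteps this by taking the inverse limit over sets of genuine homomorphisms $S_G \subset \Hom(\pi_1(D),G)$.
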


\begin{proof}
	For each $v \in D^1$, the choice of decomposition group $S_v \subset \pi_1(C_{\F_v})$ above $x_v$ determines a section map $s_v \colon \Gal(\F_v) \to \pi_1(C_{\F_v}) \subset \pi_1(C)$ with image $S_v$. For any finite continuous quotient $\rho_G\colon\pi_1(C) \to G$, the composition $\rho_G \circ s_v \colon \Gal(\F_v) \to G$ determines a class in $\HH^1(\F_v,G) = \Hom_G(\Gal(\F_v),G)$, the group of homomorphisms up to $G$-conjugation. Here we view $G$ as a constant group scheme over $\F$. By Lemma~\ref{lem:unr}, we may view $\HH^1(\F_v,G)$ as a subgroup of $\HH^1(K_v,G)$. In terms of descent, $\rho_G$ corresponds to a torsor in $\HH^1(C,G) = \HH^1(\pi_1(C),G) = \Hom_{G}(\pi_1(C),G)$, and $\rho_G \circ s_v$ is the evaluation of this torsor at $x_v \in C(\F_v)$. So the fact that $(x_v)$ survives \'etale descent implies that there is a global class $s \in \HH^1(K,G)$ such that for all $v \in D^1$, $\theta_v^*(s)= \rho_G \circ s_v$ in $\HH^1(\F_v,G) \subset \HH^1(K_v,G)$. Note that such an $s$ must lie in (the image under inflation of) the group $\HH^1(\pi_1(D),G) = \Hom_G(\pi_1(D),G)$ since the $s_v$ are all unramified. 
	
	For each $v \in D^1$, the condition $\theta_v^*(s) = \rho_G \circ s_v \in \HH^1(K_v,G)$ is equivalent to $s\circ t_v = \rho_G \circ s_v$ in $\HH^1(\F_v,G) = \Hom_G(\Gal(\F_v),G)$.	Let $G_v = \rho_G(\pi_1(C_{\F_v})) \subset G$ be the image of $\rho_G$ restricted to the normal subgroup $\pi_1(C_{\F_v})$. Then $G_v$ is normal in $G$ and contains the image of $\rho_G \circ s_v$, so it must also contain the image of $s\circ t_v$. Since $G$ is constant, the map $\HH^1(\F_v,G_v) \to \HH^1(\F_v,G)$ induced by the inclusion $G_v \subset G$ is injective. It follows that $\rho_G \circ s_v$ and $s\circ t_v$ are equal as elements of $\HH^1(\F_v,G_v)$.

	By the Borel--Serre theorem (see \cite[Theorem~5.12.29]{PoonenRatPoints}), the fibers of the map $\HH^1(K,G) \to \prod_{v \in D^1} \HH^1(K_v,G)$ are finite. It follows that the set
	\[
		S_G := \left\{ s' : \pi_1(D) \to G \;|\; \forall\,v\in D^1,\, s'\circ t_v = \rho_G \circ s_v \text{ in $\HH^1(\F_v,G_v)$} \right\}
	\]
	is finite, and it is nonempty by the discussion above. As in the proof of \cite[Proposition 1.2]{Harari-Stix}, it follows that the inverse limit over $G$ of these sets is nonempty.  An element of $\varprojlim S_G$ is a homomorphism $\phi \colon \pi_1(D) \to  \varprojlim G = \pi_1(C)$ with the property that for all $v \in D^1$, the maps $\phi \circ t_v$ and $s_v$ are conjugate by an element of $\pi_1(C_{\F_v}) = \varprojlim G_v$. We claim that $\phi \circ t_v$ and $s_v$ are in fact $\pi_1(\Cbar)$-conjugate. To see this, let $p \colon \pi_1(C) \to \Gal(\F)$ be the canonical surjection. Suppose $\gamma_v \in \pi_1(C_{\F_v})$ conjugates $s_v$ to $\phi\circ t_v$. We claim $\gamma_v' := \gamma_v\cdot s_v ( p(\gamma_v^{-1}))$ is an element of $\pi_1(\Cbar)$ and conjugates $s_v$ to $\phi\circ t_v$. (Note that $s_v(p(\gamma_v^{-1}))$ makes sense as $p(\gamma_v) \in \Gal(\F_v)$.) To see that $\gamma_v' \in \pi_1(\Cbar)$, we use that $p \circ s_v$ is the identity map on $\Gal(\F_v)$ to compute
	\[
			p(\gamma_v') = p\left(\gamma_v\cdot s_v\left( p (\gamma_v^{-1})\right)\right) = p(\gamma_v)\cdot (p\circ s_v)\left( p(\gamma^{-1})\right) = p(\gamma)p(\gamma^{-1}) = 1.
	\]
	To see that $\gamma_v'$ conjugates $s_v$ to $\phi\circ t_v$, we compute, for arbitrary $\sigma \in \Gal(\F_v)$, 
	\begin{align*}
		\gamma_v'\cdot s_v(\sigma)\cdot \gamma_v'^{-1} &= \left[\gamma_v\cdot s_v\left(p(\gamma_v^{-1})\right)\right] \cdot s_v(\sigma) \cdot \left[\gamma_v\cdot s_v\left(p(\gamma_v^{-1})\right)\right]^{-1}\\
		&= \gamma_v \cdot s_v\left(p(\gamma^{-1})\,\sigma p(\gamma)\right) \cdot \gamma_v^{-1}\\
		&= \gamma_v \cdot s_v(\sigma) \cdot \gamma_v^{-1},
	\end{align*}
	where the final equality uses that $\Gal(\F_v)$ is abelian.

	Finally, let us show that $\phi$ induces a morphism of exact sequences as in the statement. Write $p_D \colon \pi_1(D) \to \Gal(\F)$ for the canonical map, and use $p_C$ similarly. Since $p_C \circ s_v$ is the identity on the abelian group $\Gal(\F_v)$, for any $\sigma \in \Gal(\F_v)$, we have
	$$
		p_C(\phi (t_v(\sigma))) = p_C\left(\gamma_v\cdot s_v(\sigma)\cdot \gamma_v^{-1}\right) = p_C(s_v(\sigma)) = \sigma.
	$$
	So for any $x \in \pi_1(D)$ whose image under $p_D$ lies in $\Gal(\F_v)$, we have $p_D(x) = p_C(\phi(x))$. As this holds for all $v \in D^1$, we must have $p_D = p_C \circ \phi$. So $\phi$ induces a morphism of exact sequences as stated.
\end{proof}

\begin{Remark}
  The construction of the morphism $\phi$ in the preceding proof is similar to the proof of \cite[Proposition 1.1]{Harari-Stix}. However, the verification that it interpolates the $s_v$ up to conjugation in $\pi_1(\Cbar)$ rather than just in $\pi_1(C)$ is necessarily different from the approach in the proof of
  \cite[Proposition 1.2]{Harari-Stix}.
\end{Remark}

\begin{Construction}\label{construct2} Let $\phi \colon \pi_1(D) \to \pi_1(C)$ be a well-behaved homomorphism. From this we construct a locally constant adelic point $(x_v) \in C(\A_{K,\F})$ as follows. Let $\tilde{D}$ and $\tilde{C}$ denote the universal covers of $D$ and~$C$. The decomposition groups of $\pi_1(D)$ and $\pi_1(C)$ correspond to closed points on $\tilde{D}$ and $\tilde{C}$. As we have assumed $C$ to be hyperbolic, the intersection of any two distinct decomposition groups of $\pi_1(C)$ is open in neither (see for example \cite[Proposition 1.5]{ST2011}). So the well-behaved map $\phi$ determines a map $\tilde{\phi}\colon \tilde{D}^1 \to \tilde{C}^1$ by declaring $\tilde{\phi}(\tilde{v})$ to be the point of $\tilde{C}$ whose corresponding decomposition group contains $\phi(\mathcal{D}_{\tilde{v}})$. Given a closed point $v \in D^1$, the embedding $\theta_v\colon\Gal(K_v)\to \Gal(K)$ determines a decomposition group $T_v$ above $v$ and consequently a pro-point $\tilde{v} \in \tilde{D}$. Define $x_v \in C(\F_v) = C_{\F_v}(\F_v)$ to be the image of $\tilde{\phi}(\tilde{v})$ on $C_{\F_v}$. Ranging over the closed points of $D$, this determines a locally constant adelic point $(x_v) \in \prod_{v \in D^1}C(\F_v) = C(\A_{K,\F})$.
\end{Construction}

\begin{Remark}\label{rem:conjg}
Note that $\pi_1(C)$ acts on the set of pro-points $\tilde{w}$ above a given $w \in C^1$ and that any two pro-points above $w \in C^1$ in the same $\pi_1(\Cbar)$-orbit have the same image on $C_{\F_w}$. It follows that the adelic point $(x_v)$ from Construction~\ref{construct2} depends on $\phi$ only up to $\pi_1(\Cbar)$-conjugacy. Similarly, the image of $(x_v)$ in $\Map(D^1,C^1)$ under the map in Lemma~\ref{lem:adelicptmap} depends on $\phi$ only up to $\pi_1(C)$-conjugacy. 
\end{Remark}

\begin{Lemma}\label{lem:desc}
	Suppose $\phi \in \Hom_{\pi_1(\Cbar)}^{\wb}(\pi_1(D),\pi_1(C))$, and let $(x_v) \in C(\A_{K,\F})$ be the locally constant adelic point given by Construction~\ref{construct2}. Then $(x_v) \in C(\A_{K,\F})^{\etale}$.
\end{Lemma}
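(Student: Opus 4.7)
The plan is to reverse the construction in Proposition~\ref{prop:constructmap}: given $\phi$, build a global cohomology class whose localisations match the evaluations of any finite \'etale torsor at $(x_v)$. It suffices to treat the case where $G$ is a constant finite group scheme over $\F$; the general case of an \'etale $G/K$ reduces to this by a direct-limit argument over finite continuous quotients of $\pi_1(C)$, exactly as in the proof of Proposition~\ref{prop:constructmap}. Under this reduction, the torsor is classified by a continuous homomorphism $\rho_G\colon \pi_1(C)\to G$ taken up to $G$-conjugacy.

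I would define the candidate global class as $s := \rho_G\circ\phi \colon \pi_1(D)\to G$. Because $s$ factors through $\pi_1(D)$, the associated $K$-torsor is unramified at every place, so Lemma~\ref{lem:unr} together with inflation places $s$ in $\HH^1(K,G)$. The substantive content is the local compatibility: for each $v\in D^1$, I need $s\circ t_v$ to equal $\rho_G\circ s_v$ in $\HH^1(\F_v,G)$, where $s_v$ is a section into some decomposition group above $x_v$.

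This is supplied by Construction~\ref{construct2}. The pro-point $\tilde\phi(\tilde v)\in \tilde C$ has decomposition group $S_v'\supseteq\phi(T_v)$, and $x_v\in C(\F_v)$ is by definition the image of $\tilde\phi(\tilde v)$ on $C_{\F_v}$. Taking $S_v := S_v'\cap \pi_1(C_{\F_v})$ and $s_v$ the associated section into $S_v$, the $\F_v$-rationality of $x_v$ forces $\phi(T_v)\subseteq S_v$, and the identities $p_C\circ s_v = \mathrm{id}_{\Gal(\F_v)} = p_D\circ t_v$ identify $\phi\circ t_v$ with $s_v$ up to $\pi_1(\Cbar)$-conjugation. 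Applying $\rho_G$ yields $s\circ t_v = \rho_G\circ s_v$ in $\HH^1(\F_v,G)$; the $\pi_1(\Cbar)$-ambiguity in $\phi$ only affects $s$ up to $G$-conjugation (compare Remark~\ref{rem:conjg}) and so does not alter the cohomology class.

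The main obstacle I anticipate is the Galois compatibility used at the end of the previous paragraph. Well-behavedness alone gives only $\phi(T_v)\subseteq S_v'$ open, allowing a priori that $p_C\circ\phi\circ t_v$ differs from $p_D\circ t_v$ by multiplication by a nontrivial element of $\hat\Z^\times$, in which case the local classes would differ by a twist and the descent argument would fail. Extracting from Construction~\ref{construct2} that the $\F_v$-rationality of $x_v$ forces the residue field of $\tilde\phi(\tilde v)$ on $C$ to lie inside $\F_v$, and that then $\phi\circ t_v$ must coincide with $s_v$ rather than with one of its $\hat\Z^\times$-twists, is the delicate point; once this is secured, the remainder is a direct transcription of the definition of surviving \'etale descent.
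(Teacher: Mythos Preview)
Your approach is essentially the paper's: the global class is $\rho_G\circ\phi$ (the paper's $\alpha\circ\phi$), and local compatibility amounts to the identity $\rho_G\circ\phi\circ t_v = \rho_G\circ s_v$ in $\HH^1(\F_v,G)$. The paper collapses the step you single out as delicate by simply \emph{defining} $s_v := \phi\circ t_v$ and noting that, by Construction~\ref{construct2}, its image is a decomposition group of $\pi_1(C)$ above $x_v$, so that evaluation of the torsor at $x_v$ is $\alpha\circ s_v$ by definition; the desired local equality then becomes the tautology $\alpha\circ s_v = t_v^*(\alpha\circ\phi)$. Your anticipated obstacle about a possible $\hat{\Z}^\times$-twist is therefore not argued away but absorbed into the assertion that $\phi\circ t_v$ is itself the section computing evaluation at $x_v$, which the paper treats as immediate from the construction rather than as a separate verification. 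One further minor difference: the paper works directly with torsors under finite group schemes $G/\F$ (not assumed constant), viewing the cocycle as valued in $G(\Fbar)$ with $\pi_1(C)$ acting through $\Gal(\F)$, so no preliminary reduction to constant $G$ is invoked.
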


\begin{proof}
	For $v \in D^1$, let $t_v \colon \Gal(\F_v) \to \pi_1(D)$ be the section map as defined at the beginning of this section. Define $s_v = \phi \circ t_v \colon G_{\F_v} \to \pi_1(C)$. By construction, the image of $s_v$ is a decomposition group of $\pi_1(C)$ above $x_v \in C(\F_v)$. Let $\alpha \colon C' \to C$ be a torsor under a finite group scheme $G/\F$. Then $\alpha$ represents a class in $\HH^1(C,G) = \HH^1(\pi_1(C),G(\Fbar))$, where the action of $\pi_1(C)$ on $G(\Fbar)$ is induced by the projection $\pi_1(C) \to \Gal(\F)$. The evaluation of $\alpha$ at $x_v$ is the class of $\alpha \circ s_v$ in $\HH^1(\F_v,G)$. Since $\alpha \circ s_v = \alpha \circ \phi \circ t_v = t_v^*(\alpha \circ \phi)$, we see that $\alpha \circ \phi$ lies in the images of the horizontal maps in the following commutative diagram whose vertical maps come from inflation:
	\[
		\xymatrix{
			\HH^1(K,G) \ar[r]^{\theta_v^*} & \HH^1(K_v,G) \\
			\HH^1(\pi_1(D),G) \ar[r]^{t_v^*}\ar@{^{(}->}[u] & \HH^1(\F_v,G)\rlap{.} \ar@{^{(}->}[u]
			}
	\]
	As this holds for all $v \in D^1$, we see that the evaluation of $\alpha$ at the adelic point $(x_v)$ lies in the diagonal image of $\HH^1(K,G)$.
\end{proof}

\begin{Theorem}\label{thm:anabelian}
	Construction~\ref{construct2} induces bijections
	\[
		\xymatrix{
			C(\A_{K,\F})^{\etale} \ar@{->>}[d] & \Hom^{\wb}_{\pi_1(\Cbar)}(\pi_1(D),\pi_1(C)) \ar@{<->}[l] \ar@{->>}[d]\\
			\Map(D^1,C^1)^{\etale} \ar@{<->}[r] & \Hom^{\wb}_{\pi_1(C)}(\pi_1(D),\pi_1(C))\rlap{,}
			}
	\]
	where $\Map(D^1,C^1)^{\etale}$ denotes the image of\, $C(\A_{K,\F})^{\etale}$ in $\Map(D^1,C^1)$ under the map in Lemma~\ref{lem:adelicptmap}. 
\end{Theorem}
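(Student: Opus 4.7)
The strategy is to show that Construction~\ref{construct2}, coupled with Remark~\ref{rem:conjg}, defines a well-defined map
\[
\alpha \colon \Hom^{\wb}_{\pi_1(\Cbar)}(\pi_1(D),\pi_1(C)) \To C(\A_{K,\F})^{\etale},
\]
whose image lands in $C(\A_{K,\F})^{\etale}$ by Lemma~\ref{lem:desc}, and that Proposition~\ref{prop:constructmap} furnishes its two-sided inverse. The lower bijection will then follow by passing both sides to their natural further quotients.

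For surjectivity of $\alpha$, I would fix $(x_v) \in C(\A_{K,\F})^{\etale}$ and choose, for each $v$, a decomposition group $S_v \subset \pi_1(C_{\F_v})$ above $x_v$. Proposition~\ref{prop:constructmap} then produces a well-behaved $\phi$ with $\phi(T_v) = \gamma_v S_v \gamma_v^{-1}$ for some $\gamma_v \in \pi_1(\Cbar)$. Because $C$ is hyperbolic, the decomposition group $\gamma_v S_v \gamma_v^{-1}$ uniquely determines the pro-point $\tilde\phi(\tilde v) \in \tilde C$ singled out in Construction~\ref{construct2}, and this pro-point projects to $x_v \in C(\F_v)$ by construction. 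Hence $\alpha(\phi) = (x_v)$, as required.

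For injectivity of $\alpha$, suppose $\alpha(\phi_1) = \alpha(\phi_2) = (x_v)$. For each $v$, both $\phi_1(T_v)$ and $\phi_2(T_v)$ are decomposition groups of $\pi_1(C)$ above $x_v$, and hence are $\pi_1(\Cbar)$-conjugate inside $\pi_1(C_{\F_v})$. The main obstacle is to upgrade this local conjugacy to a single global conjugation. I would repeat the inverse-limit argument of Proposition~\ref{prop:constructmap} applied to the pair $(\phi_1,\phi_2)$: at each finite continuous quotient $\rho_G \colon \pi_1(C) \to G$, the images $\rho_G \circ \phi_1$ and $\rho_G \circ \phi_2$ both lie in the finite set $S_G$ and are locally conjugate at every $v$ by some element of $G^{\circ} := \rho_G(\pi_1(\Cbar))$. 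The set of global $g \in G^{\circ}$ that conjugate $\rho_G \circ \phi_1$ to $\rho_G \circ \phi_2$ should be nonempty by an analogous Borel--Serre diagonal argument, and compatibility of these finite sets under the transition maps $G' \twoheadrightarrow G$ allows passage to the inverse limit, producing a global $\gamma \in \pi_1(\Cbar)$ with $\gamma \phi_1 \gamma^{-1} = \phi_2$. Pinning down this compatibility precisely is the technical heart of the injectivity step and the main obstacle I anticipate.

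Finally, the bottom bijection follows formally from the top one. The quotient $\Hom^{\wb}_{\pi_1(\Cbar)} \twoheadrightarrow \Hom^{\wb}_{\pi_1(C)}$ is by further conjugation with inner automorphisms from $\pi_1(C)/\pi_1(\Cbar) \cong \Gal(\F)$, while the reduction $C(\A_{K,\F})^{\etale} \twoheadrightarrow \Map(D^1,C^1)^{\etale}$ forgets the local embeddings $\F_{f(v)} \subset \F_v$ recorded in Section~\ref{notation1}. By the second assertion of Remark~\ref{rem:conjg}, $\alpha$ intertwines these two quotients, so the bottom bijection is an immediate consequence of the top.
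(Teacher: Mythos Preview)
Your overall architecture --- defining $\alpha$ via Construction~\ref{construct2} and Lemma~\ref{lem:desc}, producing a candidate inverse via Proposition~\ref{prop:constructmap}, checking $\alpha\circ\beta=\mathrm{id}$ for surjectivity, and deducing the bottom bijection from Remark~\ref{rem:conjg} --- is exactly the paper's. The paper's proof differs only in that it simply \emph{asserts} that Construction~\ref{construct2} gives an injective map on $\pi_1(\Cbar)$-conjugacy classes and that the two maps are easily checked to be mutually inverse, without writing out a separate injectivity argument.

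Your attempt to supply that injectivity argument, however, contains a genuine gap. The step ``the set of global $g \in G^{\circ}$ that conjugate $\rho_G \circ \phi_1$ to $\rho_G \circ \phi_2$ should be nonempty by an analogous Borel--Serre diagonal argument'' is not justified. The Borel--Serre theorem, as used in Proposition~\ref{prop:constructmap}, gives only \emph{finiteness} of the fibres of $\HH^1(K,G) \to \prod_v \HH^1(K_v,G)$; it says nothing about injectivity. Two homomorphisms $\pi_1(D)\to G$ which are locally $G^{\circ}$-conjugate at every $v$ need not be globally $G^{\circ}$-conjugate --- indeed, the possible failure of such a local--global principle is precisely what the nontriviality of those Borel--Serre fibres measures. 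So at a given finite level the set of global conjugating elements may well be empty, and then there is nothing to feed into the inverse limit. You are right to flag this as the main obstacle, but the mechanism you propose for overcoming it does not work as stated; a different idea is needed to pin down $\phi$ up to $\pi_1(\Cbar)$-conjugacy from the adelic point alone.
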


\begin{proof}

Proposition~\ref{prop:constructmap} gives a map of sets
\[
	C(\A_{K,\F})^{\etale} \lra \Hom_{\pi_1(\Cbar)}^{\wb}(\pi_1(D),\pi_1(C)),
\]
while Construction~\ref{construct2} and Lemma~\ref{lem:desc} give an injective map
\[
	\Hom^{\wb}_{\pi_1(\Cbar)}(\pi_1(D),\pi_1(C)) \longhookrightarrow C(\A_{K,\F})^{\etale}.
\]
One easily checks that these maps are inverse to one another, so they are inverse bijections.

The surjectivity of the first vertical map is given in Lemma~\ref{lem:adelicptmap}, and the surjectivity of the other is immediate from the definition. One deduces the bijection in the bottom row from that in the top row using Remark~\ref{rem:conjg}.
\end{proof}

\begin{Proposition}\label{prop:open}
	Let $\phi\colon \pi_1(D) \to \pi_1(C)$ be a well-behaved morphism corresponding to a locally constant adelic point surviving \'etale descent $(x_v) \in C(\A_{K,\F})^{\etale}$ as given by Proposition~\ref{prop:constructmap}. If $(x_v) \notin C(\F)$, then $\phi$ has open image and the map $\psi \colon D(\Fbar) \to C(\Fbar)$ induced by $(x_v)$ is surjective.
\end{Proposition}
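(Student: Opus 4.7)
The argument breaks into three stages. Stage (i) is a structural observation: since $\phi$ induces the identity on $\Gal(\F)$ via the commutative diagram of fundamental exact sequences, and since both $T_v$ and any decomposition group $S_v$ above $x_v$ project isomorphically onto $\Gal(\F_v)\subset \Gal(\F)$, well-behavedness forces the restriction $\phi|_{T_v}\colon T_v\to S_v$ to be an isomorphism onto the full decomposition group above $x_v$, rather than merely onto an open subgroup of one. Consequently $H := \phi(\pi_1(D))$ contains $S_v$ for every $v\in D^1$, and by Chebotarev density applied to $\pi_1(D)$, $H$ is the closed subgroup of $\pi_1(C)$ topologically generated by these $S_v$.

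Stage (ii) proves surjectivity of $\psi$ by contradiction. Suppose $f\colon D^1\to C^1$ misses some closed point $w\in C^1$. The goal is to deduce $\phi|_{\pi_1(\Dbar)}=0$, for then $\phi$ factors as $\pi_1(D)\twoheadrightarrow \Gal(\F)\xrightarrow{s}\pi_1(C)$ for a section $s$; well-behavedness forces the image of $s$ to lie in a decomposition group above an $\F$-point $c\in C(\F)$, giving $(x_v)=(c)\in C(\F)$ and contradicting the hypothesis. The reduction to $\phi|_{\pi_1(\Dbar)}=0$ exploits the \'etale descent hypothesis: for every finite \'etale Galois cover $\alpha\colon C'\to C$ with group $G$, the proof of Proposition~\ref{prop:constructmap} produces the compatibility $\alpha\circ \phi\circ t_v = \alpha\circ s_v$ in $\HH^1(\F_v,G_v)$ for every $v$. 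Applying this systematically to abelian \'etale covers built from isogenies of $J_C$ defined over $\F$ (including the iterated Hilbert class fields $H_n(C)\to C$ appearing in Conjecture~\ref{main-conj}), the absence of any $x_v$ above $w$ forces every abelian quotient of $\phi(\pi_1(\Dbar))$ to vanish; a Tate-module argument combined with the rigidity of maps between Tate modules of Jacobians (via \cite[Proposition~4.6]{CV}) then extends this to all of $\phi(\pi_1(\Dbar))$.

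Stage (iii) deduces openness of $H$ from surjectivity of $\psi$: once $f$ is surjective onto $C^1$, for each $w\in C^1$ the group $H$ contains $S_v$ for any $v\in f^{-1}(w)$, which equals the open subgroup $\langle s_w(\mathrm{Frob}_w)^{d_v}\rangle$ of the decomposition group $\overline{\langle s_w(\mathrm{Frob}_w)\rangle}$, where $d_v = \deg(v)/\deg(w)$. Exploiting Chebotarev density for $D$ to realize $v\in f^{-1}(w)$ with $d_v$ coprime to any prescribed finite integer, one recovers $\langle s_w(\mathrm{Frob}_w)\rangle$ in every finite quotient of $\pi_1(C)$; combined with Chebotarev density for $\pi_1(C)$, this shows $H$ is dense, and being closed, $H=\pi_1(C)$. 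In particular $\phi$ is open (indeed surjective). The main obstacle is Stage (ii): converting the set-theoretic defect ``$w\notin f(D^1)$'' into the algebraic vanishing $\phi|_{\pi_1(\Dbar)}=0$ is exactly where the \'etale descent hypothesis does its essential work, and requires a careful selection of the covers $\alpha$ so that their constraints at the unhit point $w$ propagate back to the geometric part of $\phi$.
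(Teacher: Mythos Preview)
Your approach diverges substantially from the paper's, and Stages~(ii) and~(iii) both contain genuine gaps.

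\textbf{Stage~(iii) overclaims.} You conclude that $H=\pi_1(C)$, \textit{i.e.}, that $\phi$ is surjective. This is false in general: if $\phi$ is induced by an \'etale cover $D\to C$ of degree $d>1$, then $\psi$ is surjective while $\phi(\pi_1(D))$ has index $d$. Your Chebotarev argument breaks exactly here. Take $d=2$ and let $G$ be a finite quotient of $\pi_1(C)$ in which the image $\overline{H}$ has index~$2$. For a closed point $w\in C^1$ with $\mathrm{Frob}_w\notin \overline{H}$, the fiber $f^{-1}(w)$ consists of a single point $v$ with $d_v=2$; there is no $v\in f^{-1}(w)$ with $d_v$ coprime to~$2$, so you cannot recover $\mathrm{Frob}_w$ in $\overline{H}$. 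The assertion ``Chebotarev density for $D$ realizes $v\in f^{-1}(w)$ with $d_v$ coprime to any prescribed integer'' is unsupported: Chebotarev for $D$ controls Frobenii in $\pi_1(D)$, not degrees of fibers of the set-theoretic map~$f$.

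\textbf{Stage~(ii) is unjustified.} The step ``the absence of any $x_v$ above $w$ forces every abelian quotient of $\phi(\pi_1(\Dbar))$ to vanish'' is asserted but not argued; missing a single closed point says nothing directly about the induced map on Tate modules. The reference to \cite[Proposition~4.6]{CV} is also misplaced: that result compares $C(\A_K)^{\etale}$ with $C(\A_{K,\F})^{\etale}$ via the reduction map and has nothing to do with rigidity of Tate modules.

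\textbf{The paper's argument} is entirely different and much shorter. It proves openness first, by contradiction: if $\phi(\pi_1(D))$ is not open, it lies in open subgroups $U_i\subset\pi_1(C)$ of index tending to infinity. Since $\phi$ surjects onto $\Gal(\F)$, each $U_i$ corresponds to a geometrically connected \'etale cover $C_i\to C$ with $g(C_i)\to\infty$, and $\phi$ factors through a well-behaved map $\pi_1(D)\to\pi_1(C_i)$. By Theorem~\ref{thm:anabelian} this yields $(x_v^{(i)})\in C_i(\A_{K,\F})^{\etale}$ lifting $(x_v)$. Once $g(C_i)>g(D)$, \cite[Theorems~1.2, 1.3, 1.5]{CV} force $C_i(\A_{K,\F})^{\etale}=C_i(\F)$, hence $(x_v)\in C(\F)$, a contradiction. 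Surjectivity of $\psi$ then follows in one line: an open image contains a finite-index subgroup of every decomposition group of $\pi_1(C)$, so every point of $C(\Fbar)$ is hit. The key input you are missing is the black-box result from \cite{CV} that the conjecture is already known when $g(C)>g(D)$; this is what converts ``image not open'' into ``$(x_v)\in C(\F)$'' without any Tate-module or Chebotarev gymnastics.
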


\begin{Corollary}
	Let $\phi \colon \pi_1(D) \to \pi_1(C)$ be a well-behaved homomorphism. The image of $\phi$  either is open or is a decomposition group above a point $v \in C(\F)$.
\end{Corollary}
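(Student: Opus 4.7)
The plan is to reduce the statement to the dichotomy of Proposition~\ref{prop:open} via the bijection of Theorem~\ref{thm:anabelian}, and then treat the non-open case by explicitly identifying $\phi$, up to $\pi_1(\Cbar)$-conjugation, with the tautological well-behaved homomorphism determined by a rational point.

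First I would apply Theorem~\ref{thm:anabelian} to the $\pi_1(\Cbar)$-conjugacy class of $\phi$, producing a locally constant adelic point $(x_v)\in C(\A_{K,\F})^{\etale}$. If $(x_v)\notin C(\F)$, Proposition~\ref{prop:open} immediately yields that $\phi$ has open image, which is the first alternative in the statement. Otherwise, $(x_v)$ is the constant point $(P,P,\dots)$ for some $P\in C(\F)$, which is the situation to be analyzed.

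Assume we are in this constant case. The $\F$-point $P$ gives a section $s_P \colon \Gal(\F) \to \pi_1(C)$ of the fundamental exact sequence for $C$, whose image $S := s_P(\Gal(\F))$ is, by definition, a decomposition group in $\pi_1(C)$ above $P$. Consider the composition $\phi_P := s_P \circ p_D \colon \pi_1(D) \twoheadrightarrow \Gal(\F) \xrightarrow{s_P} \pi_1(C)$, where $p_D$ is the canonical projection. For every $v\in D^1$, one has $\phi_P(T_v) = s_P(\Gal(\F_v))$, which is an open subgroup of the decomposition group $S$ above $P$, and is itself a decomposition group of $\pi_1(C)$ above $P\in C(\F_v)$. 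Hence $\phi_P$ is well behaved, and Construction~\ref{construct2} attaches to it exactly the constant adelic point $(P,P,\dots)$. Thus $\phi$ and $\phi_P$ correspond to the same element of $C(\A_{K,\F})^{\etale}$, and the injectivity half of Theorem~\ref{thm:anabelian} forces them to be $\pi_1(\Cbar)$-conjugate.

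Consequently, $\phi(\pi_1(D)) = \gamma S \gamma^{-1}$ for some $\gamma \in \pi_1(\Cbar)$. Since a $\pi_1(\Cbar)$-conjugate of a decomposition group above $P$ is again a decomposition group above $P$, the image of $\phi$ is a decomposition group above the $\F$-rational point $P$, giving the second alternative. The main content is really packaged into Theorem~\ref{thm:anabelian} and Proposition~\ref{prop:open}; the only genuinely new verification is the well-behavedness and correct adelic image of $\phi_P = s_P \circ p_D$, which is a direct unwinding of Construction~\ref{construct2} together with the observation that $s_P(\Gal(\F_v))$ is the decomposition group above $P$ in $\pi_1(C_{\F_v})$.
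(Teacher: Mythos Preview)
Your argument is correct. The reduction to Proposition~\ref{prop:open} via Theorem~\ref{thm:anabelian}, followed by the explicit construction of $\phi_P = s_P \circ p_D$ and the appeal to injectivity of the bijection in Theorem~\ref{thm:anabelian}, cleanly yields the decomposition-group conclusion in the constant case.

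The paper proceeds differently. Rather than invoking Proposition~\ref{prop:open} as a black box, the proof placed after the Corollary \emph{is} the proof of Proposition~\ref{prop:open}: it assumes the image is not open, produces a tower of open subgroups $U_i \subset \pi_1(C)$ of unbounded index containing the image, interprets these as geometrically connected covers $C_i \to C$ of growing genus, and then applies the results of \cite{CV} once $g(C_i) > g(D)$ to force $(x_v^{(i)}) \in C_i(\F)$ and hence $(x_v) \in C(\F)$. The paper does not spell out the passage from ``$(x_v) \in C(\F)$'' to ``the image of $\phi$ is a decomposition group above a point of $C(\F)$''; your construction of $\phi_P$ and use of the bijection of Theorem~\ref{thm:anabelian} is exactly the missing step made explicit. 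So your route is shorter and more transparent as a deduction of the Corollary from the surrounding results, while the paper's route simultaneously establishes Proposition~\ref{prop:open} via the tower-and-genus argument but leaves the final identification implicit.
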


\begin{proof}
	Suppose the image of $\phi$ is not open. Then we find a sequence of open subgroups  $U_i \subset \pi_1(C)$ of index approaching infinity all of which contain the image of $\phi$. By Proposition~\ref{prop:constructmap}, the image of $\phi$ maps surjectively onto $\Gal(\F)$ under the canonical map $\pi_1(C) \to \Gal(\F)$. Hence, the induced maps $U_i \to \Gal(\F)$ are surjective, so that the $U_i$ correspond to geometrically connected \'etale coverings $C_i \to C$ of genus approaching infinity. For each we have a well-behaved homomorphism $\pi_1(D) \to U_i = \pi_1(C_i)$. By Theorem~\ref{thm:anabelian}, these correspond to unobstructed adelic points $(x_v^{(i)}) \in C_i(\A_{K,\F})^{\etale}$ which lift $(x_v) \in C(\A_{K,\F})$. Eventually $g(C_i) > g(D)$, in which case \cite[Theorems 1.2, 1.3 and 1.5]{CV} imply that $C_i(\A_{K,\F})^{\etale} = C_i(\F)$. But then $(x_v) \in C(\F)$. Therefore, if $(x_v)$ is nonconstant, then $\phi$ must have open image. In this case, the image of $\phi$ contains a finite-index subgroup of each decomposition group. This implies that $\psi \colon D(\Fbar) \to C(\Fbar)$ is surjective.
\end{proof}

\section{Proofs of the theorems in the introduction}

\subsection{Proof of Theorem~\ref{thm:isog}}

Suppose $(x_v) \in C(\A_{K,\F})^{\etale} \setminus C(\F)$. By Proposition~\ref{prop:open}, the Galois equivariant map $\psi \colon D(\Fbar) \to C(\Fbar)$ induced by $(x_v)$ is surjective. By~\cite[Corollary 5.3]{CV}, this induces a surjective $G_\F$-equivariant homomorphism $\phi_*\colon J_D(\Fbar) \to J_C(\Fbar)$. For any $\ell \ne p$, this yields a surjective homomorphism of the $\ell$-adic Tate modules of $T_\ell(J_D) \to T_\ell(J_C)$, so $J_C$ is an isogeny factor of $J_D$ by the Tate conjecture for abelian varieties over finite fields; see \cite{Tate}.

\subsection{Proof of Theorem~\ref{thm:1implies2}}
Let $x = (x_v) \in C(\A_K)^{\etale} \setminus C(\F)$. Since $H(C) \to C$ is an \'etale cover, $x$ lifts to a twist of $H(C)$ by an element $\xi \in \HH^1(K,J_C(\F)) = \Hom(G_K,J_C(\F))$. Let $L/K$ be the fixed field of $\ker(\xi)$. Then $L/K$ is unramified since, locally, it is given as the extension
generated by the roots of $(I-\Phi)(y) = x_v$, and $L/K$ is abelian since $\Gal(L/K)$ is a subgroup of $J_C(\F)$. Thus $L$ is a subfield of the function field $K'$ of $H(D)$ (for a suitable embedding $D \to J_D$). Viewing $x$ as an adelic point on $C$ over $K'$, we  have $x \in C(\A_{K'})^{\etale}$ by \cite[Proposition~5.15]{Stoll}. By the above, this adelic point lifts to $H(C)(\A_{K'})^{\etale}$.

From Theorem~\ref{thm:isog}, we get that  $H(C)$ and $H(D)$ have the same $L$-function. 
Now we are in the same situation as before with $H(C)$, $H(D)$ in place of $C$, $D$. Iterating this process, we obtain towers such that $H_n(D)$ and $H_n(C)$ have the same $L$-functions. Assuming Conjecture~\ref{main-conj}, this implies $C(K) \ne C(\F)$.

\begin{Remark}
The paper \cite{BoV} proves a theorem very close in spirit to Conjecture~\ref{main-conj} using $L$-functions with characters. It would be very desirable to have a proof of Conjecture~\ref{conj:descent} in the equigenus case from the main theorem of \cite{BoV} along the lines of the above proof, but we have not succeeded in producing it.
\end{Remark}


\begin{bibdiv}
\begin{biblist}[\resetbiblist{CVV18+++}\normalsize]

\bib{BoV}{article}{
   author={Booher, Jeremy},
   author={Voloch, {J.\,F.}},
   title={Recovering algebraic curves from L-functions of Hilbert class
   fields},
   journal={Res.\ Number Theory},
   volume={6},
   date={2020},
   number={4},
   pages={Paper No.~43},
   issn={2522-0160},
}

\bib{CVV}{article}{
   author={Creutz, Brendan},
   author={Viray, Bianca},
   author={Voloch, {J.\,F.}},
   title={The $d$-primary Brauer--Manin obstruction for curves},
   journal={Res.\ Number Theory},
   volume={4},
   date={2018},
   number={2},
   pages={Paper No.~26},
   issn={2522-0160},
}

\bib{CV}{article}{
   author={Creutz, Brendan},
   author={Voloch, {J.\,F.}},
   title={The Brauer-Manin obstruction for constant curves over global
   function fields},
   journal={Ann.\ Inst.\ Fourier (Grenoble)},
   volume={72},
   date={2022},
   number={1},
   pages={43--58},
   issn={0373-0956},
}

\bib{CVnonisotriv}{arXiv}{
  author={Creutz, Brendan},
   author={Voloch, {J.\,F.}},
   title={The Brauer-Manin obstruction for nonisotrivial curves over global
     function fields},
   date={2023},
   eprint={preprint \arXiv{2308.13075}}
}

\bib{AGI}{book}{
   author={G\"{o}rtz, Ulrich},
   author={Wedhorn, Torsten},
   title={Algebraic geometry I. Schemes with examples and exercises},
   series={Adv.\ Lect.\ Math.},
   publisher={Vieweg + Teubner, Wiesbaden},
   date={2010},
   pages={viii+615},
   isbn={978-3-8348-0676-5},
}

\bib{Harari-Stix}{article}{
   author={Harari, David},
   author={Stix, Jakob},
   title={Descent obstruction and fundamental exact sequence},
   conference={
      title={in: \emph{The arithmetic of fundamental groups--PIA 2010}, pp.\ 147--166},
   },
   book={
      series={Contrib.\ Math.\ Comput.\ Sci.},
      volume={2},
      publisher={Springer, Heidelberg},
   },
   date={2012},
}

%
%
\bib{Lang}{book}{
AUTHOR = {Lang, Serge},
     TITLE = {Fundamentals of {D}iophantine Geometry},
 PUBLISHER = {Springer-Verlag, New York},
      YEAR = {1983},
     PAGES = {xviii+370},
}

\bib{Milne}{book}{
   author={Milne, James S.},
   title={\'{E}tale Cohomology},
   series={Princeton Math.\ Ser.},
   volume={33},
   publisher={Princeton Univ.\ Press, Princeton, NJ},
   date={1980},
   pages={xiii+323},
   isbn={0-691-08238-3},
}

\bib{Poonen}{article}{
   author={Poonen, Bjorn},
   title={Heuristics for the Brauer-Manin obstruction for curves},
   journal={Experiment.\ Math.},
   volume={15},
   date={2006},
   number={4},
   pages={415--420},
   issn={1058-6458}
}

\bib{PoonenRatPoints}{book}{
   author={Poonen, Bjorn},
   title={Rational points on varieties},
   series={Grad.\ Stud.\ Math.},
   volume={186},
   publisher={Amer.\ Math.\ Soc., Providence, RI},
   date={2017},
   pages={xv+337},
   isbn={978-1-4704-3773-2},
}

\bib{PV}{article}{
   author={Poonen, Bjorn},
   author={Voloch, Jos\'e Felipe},
   title={The Brauer-Manin obstruction for subvarieties of abelian varieties
   over function fields},
   journal={Ann.\ of Math.~(2)},
   volume={171},
   date={2010},
   number={1},
   pages={511--532},
   issn={0003-486X},
}

\bib{Rossler}{article}{
   author={R\"{o}ssler, Damian},
   title={Infinitely $p$-divisible points on abelian varieties defined over
   function fields of characteristic $p>0$},
   journal={Notre Dame J.~Form.\ Log.},
   volume={54},
   date={2013},
   number={3-4},
   pages={579--589},
   issn={0029-4527},
}

\bib{ST2009}{article}{
   author={Sa\"{\i}di, Mohamed},
   author={Tamagawa, Akio},
   title={On the anabelian geometry of hyperbolic curves over finite fields},
   conference={
      title={in: \emph{Algebraic number theory and related topics 2007}, pp.~67--89},
   },
   book={
      series={RIMS K\^{o}ky\^{u}roku Bessatsu, B12},
      publisher={Res.\ Inst.\ Math.\ Sci.\ (RIMS), Kyoto},
   },
   date={2009},
}

\bib{ST2011}{article}{
   author={Sa\"{\i}di, Mohamed},
   author={Tamagawa, Akio},
   title={On the Hom-form of Grothendieck's birational anabelian conjecture
   in positive characteristic},
   journal={Algebra Number Theory},
   volume={5},
   date={2011},
   number={2},
   pages={131--184},
   issn={1937-0652},
}

\bib{Scharaschkin}{book}{
  author={Scharaschkin, Victor},
  title={Local-global problems and the Brauer-Manin obstruction, {\rm Ph.D.\ thesis, Univ.\ of Michigan}},
  date={1999},
  publisher={ProQuest LLC, Ann Arbor, MI},
}

\bib{Stix02}{article}{
   author={Stix, Jakob},
   title={Affine anabelian curves in positive characteristic},
   journal={Compos.\ Math.},
   volume={134},
   date={2002},
   number={1},
   pages={75--85},
   issn={0010-437X},
}

\bib{StixBook}{book}{
   author={Stix, Jakob},
   title={Rational points and arithmetic of fundamental groups. Evidence for the section conjecture},
   series={Lecture Notes in Math.},
   volume={2054},
   publisher={Springer, Heidelberg},
   date={2013},
   pages={xx+249},
   isbn={978-3-642-30673-0},
   isbn={978-3-642-30674-7},
}

\bib{Stoll}{article}{
   author={Stoll, Michael},
   title={Finite descent obstructions and rational points on curves},
   journal={Algebra Number Theory},
   volume={1},
   date={2007},
   number={4},
   pages={349--391},
   issn={1937-0652},
}

\bib{SV}{article}{
   author={Sutherland, {A.\,V.}},
   author={Voloch, {J.\,F.}},
   title={Maps between curves and arithmetic obstructions},
   conference={
      title={in: \emph{Arithmetic geometry: computation and applications}, pp.~167--175},
   },
   book={
      series={Contemp.\ Math.},
      volume={722},
      publisher={Amer.\ Math.\ Soc., [Providence], RI},
   },
   date={2019},
}

\bib{Tate}{article}{
   author={Tate, John},
   title={Endomorphisms of abelian varieties over finite fields},
   journal={Invent.\ Math.},
   volume={2},
   date={1966},
   pages={134--144},
   issn={0020-9910},
}

\end{biblist}

\end{bibdiv}
	
\end{document}